\documentclass[12pt]{amsart}
\usepackage{amssymb,amsfonts,amsthm,latexsym,bm}
\frenchspacing
\usepackage{hyperref}
\usepackage[centertags]{amsmath}
\usepackage{mathrsfs}
\usepackage{t1enc}
\usepackage{graphicx}
\usepackage{enumerate}


\footnotesep=11.5pt

\setlength{\textheight}{20cm}


\setlength{\topmargin}{-.5cm}



\headsep=20pt

\advance\headheight by 3pt

\newtheorem{thm}{Theorem}[section]
\newtheorem{cor}[thm]{Corollary}
\newtheorem{lem}[thm]{Lemma}

\newtheorem*{thm*}{Theorem}

\newtheorem*{subspacetheorem*}{Subspace Theorem}


\numberwithin{equation}{section}


\renewcommand{\subjclass}[1]{\thanks{\emph{2010 Mathematics Subject Classification:}~#1}}
\renewcommand{\keywords}[1]{\thanks{\emph{Keywords and Phrases:}~#1}}
\renewcommand{\date}{\thanks{\today}}



\newcommand{\xv}{{\bf x}}

\renewcommand{\AA}{\mathcal{A}}

\newcommand{\NN}{\mathcal{N}}

\newcommand{\RR}{\mathcal{R}}
\renewcommand{\SS}{\mathcal{S}}


\newcommand{\Cc}{\mathbb{C}}
\newcommand{\Rr}{\mathbb{R}}
\newcommand{\Qq}{\mathbb{Q}}

\newcommand{\Zz}{\mathbb{Z}}

\newcommand{\Aa}{\mathbb{A}}


\newcommand{\medfrac}[2]{\mbox{\large{$\textstyle{\frac{#1}{#2}}$}}}

\newcommand{\half}{\mbox{$\textstyle{\frac{1}{2}}$}}


\renewcommand{\ge}{\geq}
\renewcommand{\le}{\leq}
\newcommand{\kdots}{,\ldots ,}



\renewcommand{\gcd}{{\rm gcd}}

\def\house#1{\setbox1=\hbox{$\,#1\,$}%
\dimen1=\ht1 \advance\dimen1 by 2pt \dimen2=\dp1 \advance\dimen2
by 2pt
\setbox1=\hbox{\vrule height\dimen1 depth\dimen2\box1\vrule}%
\setbox1=\vbox{\hrule\box1}%
\advance\dimen1 by .4pt \ht1=\dimen1 \advance\dimen2 by .4pt
\dp1=\dimen2 \box1\relax}


%


\def\pitem{\advance\leftskip3mm\advance\linewidth-3mm}
\def\mitem{\advance\leftskip-3mm\advance\linewidth3mm}

\setlength{\unitlength}{4144sp}
\begingroup\makeatletter\ifx\SetFigFont\undefined
\gdef\SetFigFont#1#2#3#4#5{
  \reset@font\fontsize{#1}{#2pt}
  \fontfamily{#3}\fontseries{#4}\fontshape{#5}
  \selectfont}
\fi\endgroup



%


\title[Mahler's work on Diophantine equations]
{Mahler's work on Diophantine equations and subsequent developments}
\subjclass{11D25, 11D45, 11D57, 11D59, 11D61, 11J61, 11J68} 
\keywords{Thue-Mahler equations, Unit equations, Elliptic curves, Diophantine approximation, decomposable form equations}
\date

\author[J.-H. Evertse]{Jan-Hendrik Evertse}
\address{J.-H. Evertse \newline
         \indent Universiteit Leiden, Mathematisch Instituut, \newline
         \indent Postbus 9512, 2300 RA Leiden, The Netherlands}
\email{evertse\char'100math.leidenuniv.nl}

\author[K. Gy\H{o}ry]{K\'{a}lm\'{a}n Gy\H{o}ry}
\address{K. Gy\H{o}ry \newline
         \indent Institute of Mathematics, University of Debrecen \newline
         \indent H-4002 Debrecen, P.O. Box 400, Hungary}
\email{gyory\char'100science.unideb.hu}

\author[C.L. Stewart]{Cameron L. Stewart}
\address{C.L. Stewart \newline
          \indent Department of Pure Mathematics, 
                  University of Waterloo\newline
          \indent Waterloo, Ontario, Canada, N2L 3G1}
\email{cstewart\char'100uwaterloo.ca}

\begin{document}

\maketitle

The main body of Mahler's work on Diophantine equations
consists of his 1933 papers \cite{Mahler1933a, Mahler1933b, Mahler1933c}, 
in which he proved
a generalization of the Thue-Siegel Theorem on the approximation
of algebraic numbers by rationals, involving $P$-adic absolute values,
and applied this to get finiteness results for the number of solutions
for what became later known as Thue-Mahler equations.
He was also the first to give upper bounds for the number of solutions
of such equations. 
In fact, Mahler's extension of the Thue-Siegel Theorem made it possible
to extend various finiteness results for Diophantine equations
over the integers to $S$-integers, for any arbitrary finite set of primes 
$S$.
For instance Mahler himself \cite{Mahler1933d} extended Siegel's
finiteness theorem on integral points on elliptic curves to 
$S$-integral points.

In this paper we discuss Mahler's work on Diophantine approximation
and its applications to Diophantine equations, in particular Thue-Mahler equations, $S$-unit equations and $S$-integral points on elliptic curves,
and go into later developments concerning the number of solutions to Thue-Mahler equations and effective finiteness
results for Thue-Mahler equations. For the latter we need estimates for
$P$-adic logarithmic forms, which may be viewed as an outgrowth of Mahler's
work on the $P$-adic Gel'fond-Schneider theorem \cite{mah}. 
We also go briefly into decomposable form equations,
these are certain higher dimensional generalizations of Thue-Mahler equations.

\section{Mahler's $P$-adic generalization of the Thue-Siegel Theorem}\label{section1}

We adopt the convention that if $p/q$ denotes a rational number then
$p,q$ are integers with $\gcd (p,q)=1$ and $q>0$. 
Thue \cite{Thue1909} proved in 1909 that if $\zeta$ is any real algebraic number of degree $n$
and
$\beta >\half n +1$, then the inequality
\[
|\zeta -\medfrac{p}{q}|\leq q^{-\beta}
\]
has only finitely many solutions in rational numbers $p/q$.
This was later generalized by Siegel \cite{Siegel1921} in 1921
to all $\beta$
with
\begin{equation}\label{1.1}
\beta >\beta_0:=\min_{s=1\kdots n-1}\Big(\medfrac{n}{s+1}+s\Big).
\end{equation}
This condition is satisfied for instance if $\beta\geq 2\sqrt{n}$.
Siegel's result was further extended in the late 1940-s to $\beta >\sqrt{2n}$
by Gel'fond \cite{Gelfond1960} and Dyson \cite{Dyson1947},
and finally by Roth \cite{Roth1955} 
in 1955 to the weakest possible condition
$\beta >2$.

To state Mahler's generalization of Siegel's theorem we introduce
some notation.
For a prime number $P$,
we denote by $|\cdot |_P$ the standard $P$-adic absolute value on $\Qq$
with $|P|_P=P^{-1}$. This has a unique extension
to $\overline{\Qq_P}$. To uniformize our notation,
we write $|\cdot |_{\infty}$ for the ordinary absolute value,
and $\Qq_{\infty}$, $\overline{\Qq_{\infty}}$ for $\Rr$ and $\Cc$,
respectively. Further, we set $M_{\Qq}:=\{\infty\}\cup\{ {\rm primes}\}$.
We use the notation $|p,q|$ for $\max (|p|,|q|)$.

Now Mahler's main theorem on Diophantine 
approximation \cite[Satz 1, p. 710]{Mahler1933a} can be stated
somewhat more efficiently as follows:

\begin{thm}\label{thm1.1}
Let $S$ be a finite subset of $M_{\Qq}$,
let $f(X)\in\Zz [X]$ be an irreducible polynomial of degree $n\geq 3$
having a zero $\zeta_P\in\Qq_P$ for $P\in S$, and let $k\geq 1$
and $\beta>\beta_0$. Then the inequality
\begin{equation}\label{1.2}
\prod_{P\in S}\min \Big(1,\left|\zeta_P-\medfrac{p}{q}\right|_P\Big)\leq k\cdot |p,q|^{-\beta}
\end{equation}
has only finitely many solutions in rational numbers $p/q$.
\end{thm}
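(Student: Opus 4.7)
The plan is to adapt the Thue--Siegel auxiliary polynomial method to the $P$-adic setting; the threshold $\beta_0$ will be unchanged from the archimedean case because the $P$-adic absolute values enter only through the product in (1.2). Suppose for contradiction that (1.2) has infinitely many solutions. For each solution and each $P\in S$ set $\lambda_P:=-\log\min(1,|\zeta_P-p/q|_P)/\log|p,q|$, so that (1.2) reads $\sum_{P\in S}\lambda_P\geq\beta-o(1)$. The $\lambda_P$ lie in a bounded box, so a pigeonhole argument extracts an infinite subsequence of solutions $p_j/q_j$ along which $|p_j,q_j|$ grows arbitrarily fast and $(\lambda_P)_{P\in S}$ converges to a limit $(\mu_P)$ with $\sum_{P\in S}\mu_P\geq\beta$.

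\textbf{Auxiliary polynomial.} Fix an integer parameter $s\in\{1,\ldots,n-1\}$ to be chosen later so as to attain the minimum in (1.1), and a small $\delta>0$. Let $\xi$ be a fixed algebraic root of $f$, so that for each $P\in S$ the local root $\zeta_P$ arises from $\xi$ via an embedding $\Qq(\xi)\hookrightarrow\overline{\Qq_P}$. Take two solutions $p_1/q_1$ and $p_2/q_2$ from the subsequence with $|p_2,q_2|$ vastly larger than $|p_1,q_1|$, and positive integers $r_1,r_2$ with $r_1\log|p_1,q_1|\approx r_2\log|p_2,q_2|$. Apply Siegel's lemma over $\Zz$ to produce a nonzero $R(X,Y)\in\Zz[X,Y]$ of bidegree $\leq(r_1,r_2)$, with coefficients of absolute value $\leq C_1^{r_1+r_2}$, whose weighted index (with weights $1/r_1,1/r_2$) at $(\zeta_P,\zeta_P)$ is at least $s-\delta$ for \emph{every} $P\in S$ simultaneously. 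The key $P$-adic observation is that each local vanishing condition at $(\zeta_P,\zeta_P)$ becomes, after multiplication by a suitable power of the leading coefficient of $f$ and expansion in the integral basis $1,\xi,\ldots,\xi^{n-1}$, a set of $\Zz$-linear homogeneous equations in the coefficients of $R$, which Siegel's lemma then solves.

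\textbf{Non-vanishing and product formula.} An elementary Thue--Siegel non-vanishing lemma in two variables yields indices $(i,j)$ with $i/r_1+j/r_2<\delta$ such that $R^{(i,j)}(p_1/q_1,p_2/q_2)\neq 0$. The rescaled $N:=q_1^{r_1}q_2^{r_2}R^{(i,j)}(p_1/q_1,p_2/q_2)$ is then a nonzero rational integer, so $\prod_{P\in M_{\Qq}}|N|_P=1$. At each $P\in S$, Taylor-expanding $R^{(i,j)}$ at $(\zeta_P,\zeta_P)$ and invoking the high weighted vanishing index yields an upper bound of the shape
\[
|R^{(i,j)}(p_1/q_1,p_2/q_2)|_P\leq C_2^{r_1+r_2}\,|\zeta_P-p_1/q_1|_P^{\alpha_P r_1}\,|\zeta_P-p_2/q_2|_P^{\beta_P r_2},
\]
with $\alpha_P+\beta_P\geq s-O(\delta)$; at places $P\notin S$ the trivial bound $|N|_P\leq C_3^{r_1+r_2}$ (times obvious denominator contributions) suffices. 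Substituting $|\zeta_P-p_i/q_i|_P\leq q_i^{-\lambda_P^{(i)}}$ and multiplying over all places, the identity $\prod_P|N|_P=1$ collapses to an inequality of the form $1\leq q_1^{(\beta_0-\beta+O(\delta))r_1}$, which is violated once $\beta>\beta_0$, $\delta$ is small, and $|p_1,q_1|$ is large enough.

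\textbf{Main obstacle.} The core difficulty is the simultaneous construction of $R$ with integer coefficients and high weighted vanishing at all of the $(\zeta_P,\zeta_P)$ for $P\in S$. Combining the $|S|$ different $P$-adic vanishing conditions into a single Siegel-lemma application over $\Zz$ is Mahler's main innovation, and it is precisely what keeps the threshold at the archimedean value $\beta_0$. The two-variable non-vanishing lemma and the place-by-place $P$-adic Taylor estimates are then technical but parallel Siegel's single-place arguments.
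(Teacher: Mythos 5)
Your overall architecture is the same as the paper's: reduce \eqref{1.2} to systems with fixed exponent tuples (the paper does this by Mahler's explicit combinatorial discretization leading to \eqref{1.5}, you do it by a compactness/pigeonhole extraction, which is adequate for bare finiteness though it forfeits the counting information the paper needs later), and then run the Thue--Siegel two-solution auxiliary-polynomial argument with the product formula $\prod_P|N|_P=1$ replacing the archimedean lower bound $|N|\ge 1$ for a non-zero integer. Two points need attention, one of them substantive.

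The substantive one: the auxiliary-polynomial parameters as you state them are inconsistent. A non-zero polynomial of bidegree at most $(r_1,r_2)$ has weighted index at most $2$ with respect to the weights $1/r_1,1/r_2$, so the requirement that the index at $(\zeta_P,\zeta_P)$ be at least $s-\delta$ is unachievable as soon as the minimizing $s$ in \eqref{1.1} exceeds $2$, which happens already for moderate $n$ (the optimal $s$ is of order $\sqrt{n}$). What Siegel's and Mahler's argument actually uses is the asymmetric setup the paper records: $R_m$ of degree at most $m$ in $X$ and at most $s$ in $Y$ with $s\le n-1$ fixed, and vanishing of $\partial^iR_m/\partial X^i$ at $(\zeta,\zeta)$ for $i<r$, i.e.\ high order of vanishing in the $X$-variable only; the solvability condition $(m+1)(s+1)>nr$ is precisely what produces the threshold $\frac{n}{s+1}+s=\beta_0$, and the non-vanishing step is then elementary (vary $m$). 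A genuinely symmetric two-variable index condition would point toward the Dyson--Gel'fond exponent and require a much harder zero estimate. As written, your construction step cannot be carried out.

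The second point is a misattribution rather than a gap: the ``main obstacle'' you identify---simultaneous high vanishing at $(\zeta_P,\zeta_P)$ for all $P\in S$---is automatic. Since $R\in\Zz[X,Y]$ and $f$ is irreducible over $\Qq$, imposing the vanishing conditions at $(\zeta,\zeta)$ for one root $\zeta$ forces them at every conjugate, hence at every $\zeta_P$; Siegel's construction already imposes them at all roots of $f$. Mahler's actual innovations are the combinatorial reduction of \eqref{1.2} to the finitely many systems \eqref{1.5} and the replacement of $|A_m|\ge1$ by $\prod_{P\in S}|A_m|_P\ge1$; your sketch does contain the latter via the product formula.
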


An important step in the proof is to reduce the single inequality
\eqref{1.2} to a finite number of systems of inequalities each
of which involves only one absolute value.
To this end, Mahler used a combinatorial argument which was also
an important tool in later work, e.g., quantitative versions
of the $P$-adic Subspace Theorem.

Choose $\beta_1$ with 
\[
\beta_0<\beta_1<\min (n,\beta )
\]
and define $\lambda$ by $\beta = (1+\lambda )\beta_1$.
We restrict ourselves to rational solutions $p/q$ of \eqref{1.2} with
\begin{equation}\label{1.3}
|p,q|\geq k^{1/\beta_1}.
\end{equation}
For each such solution we have
$k\cdot |p,q|^{-\beta}\leq (k\cdot |p,q|^{-\beta_1})^{1+\lambda}\leq 1$
since $k\geq 1$,
and
\[ 
\min \Big(1,\left|\zeta_P-\medfrac{p}{q}\right|_P\Big)=
(k\cdot |p,q|^{-\beta_1})^{(1+\lambda )\gamma_P}\ \ \mbox{for } P\in S,
\]
where $\gamma_P\geq 0$ for $P\in S$ and $\sum_{P\in S}\gamma_P\geq 1$.    
Take
\begin{equation}\label{1.4} 
v:=1+\left[\frac{t+1}{\lambda}\right]=1+\left[\frac{\beta_1}{\beta -\beta_1}\cdot (t+1)\right]
\end{equation}
and let $g_P:=[(1+\lambda)\gamma_P\cdot v]$ for $P\in S$.
Then $\sum_{P\in S} g_P\geq v$, hence there are integers $f_P$
with $0\leq f_P\leq g_P$ for $P\in S$ and $\sum_{P\in S} f_P=v$.
Taking 
$\Gamma_P:=f_P/v$ for $P\in S$, we get
$\Gamma_P\leq (1+\lambda )\gamma_P$ for $P\in S$ and thus,
\begin{equation}\label{1.5}
\min \Big(1,\left|\zeta_P-\medfrac{p}{q}\right|_P\Big)\leq
(k\cdot |p,q|^{-\beta_1})^{\Gamma_P}\ \ \mbox{for } P\in S. 
\end{equation}
Notice that $(\Gamma_P:\, P\in\ S )$ belongs to the finite set
$\SS$ of rational tuples
$(f_P/v:\, P\in S)$, where the $f_P$ are non-negative integers
with $\sum_{P\in S}f_P=v$. For later reference we record that
for all $(\Gamma_P:\, P\in S)\in\SS$ we have
\begin{equation}\label{1.6}
\sum_{P\in S}\Gamma_P=1,\ \ \Gamma_P\geq 0\ \mbox{for }P\in S
\end{equation}
and, by \eqref{1.4},
\begin{equation}\label{1.7}
|\SS |=\binom{v+t}{t}\leq 2^{v+t}
\leq 2^{\frac{\beta}{\beta-\beta_1}\cdot (t+1)},
\end{equation}
where here and below, we denote by $|\AA |$ the cardinality of a set $\AA$.
Thus, inequality \eqref{1.2} has been reduced to at most
$|\SS |$ systems \eqref{1.5},
hence to prove Theorem \ref{thm1.1}, 
it suffices to prove that each of these systems \eqref{1.5} has only finitely many 
solutions $p/q\in\Qq$. In fact, Mahler \cite[p. 709]{Mahler1933a}
proved the following more precise result.

\begin{thm}\label{thm2}
Let $\beta_0<\beta_1<n$ and let $\Gamma_P$ ($P\in S$) be non-negative
reals with $\sum_{P\in S}\Gamma_P=1$.
There are effectively computable numbers $C_1,C_2$,
depending only on $k$, $f$ and $\beta_1$,
and thus independent of the set $S$ and the reals $\Gamma_P$, 
such that if \eqref{1.5} has a solution $p_1/q_1\in\Qq$ with $|p_1,q_1|\geq C_1$ then for any other solution $p_2/q_2\in\Qq$ of \eqref{1.5} we have
$|p_2,q_2|\leq |p_1,q_1|^{C_2}$.
\end{thm}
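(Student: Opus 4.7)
The plan is to carry out a $P$-adic version of Siegel's proof of his Diophantine-approximation theorem, in which the product formula replaces the single-place inequality Siegel exploited, and in which the normalization $\sum_{P\in S}\Gamma_P=1$ ensures that all $S$- and $(\Gamma_P)$-dependence cancels out of the final constants. Suppose \eqref{1.5} has two solutions $p_1/q_1,p_2/q_2$ with $H_1:=|p_1,q_1|$ large and $H_2:=|p_2,q_2|\ge H_1$. The first step is a Siegel-lemma (pigeonhole) construction of an auxiliary polynomial $R(X,Y)\in\Zz[X,Y]$ of bidegree $(r_1,r_2)$ and logarithmic height $O(r_1+r_2)$, with implied constant depending only on $f$, vanishing at $(\zeta,\zeta)$ for every root $\zeta$ of $f$ (and hence, via an embedding $\overline{\Qq}\hookrightarrow\overline{\Qq_P}$, at each $(\zeta_P,\zeta_P)$) with weighted index in the weights $(1/r_1,1/r_2)$ as close to Siegel's extremal value as the count allows. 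The integers $r_1,r_2$ are chosen with $r_1\log H_1\approx r_2\log H_2$, and Siegel's choice of vanishing index (his sharpening of Thue's) is precisely what yields the threshold $\beta_0=\min_s(n/(s+1)+s)$ and explains the hypothesis $\beta_1>\beta_0$.

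The second step is the non-vanishing (``index'') lemma, producing non-negative integers $(i_1,i_2)$ with $i_1/r_1+i_2/r_2$ strictly below the vanishing index, such that
\[
T:=\frac{1}{i_1!\,i_2!}\frac{\partial^{i_1+i_2}R}{\partial X^{i_1}\partial Y^{i_2}}(p_1/q_1,p_2/q_2)\ne 0.
\]
Since $R\in\Zz[X,Y]$, the quantity $q_1^{r_1}q_2^{r_2}T$ is a non-zero rational integer. For each $P\in S$, Taylor-expanding the appropriate partial derivative around $(\zeta_P,\zeta_P)$ and inserting \eqref{1.5} yields
\[
|T|_P\le C_3^{r_1+r_2}\bigl(kH_1^{-\beta_1}\bigr)^{\Gamma_P\sigma_{1,P}}\bigl(kH_2^{-\beta_1}\bigr)^{\Gamma_P\sigma_{2,P}},
\]
where $\sigma_{j,P}$ are the residual vanishing orders in the corresponding variables. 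Because $\sum_{P\in S}\Gamma_P=1$, taking the product over $P\in S$ collapses the factor $\Gamma_P$ and produces an upper bound for $\prod_{P\in S}|T|_P$ with exponents independent of $S$ and $(\Gamma_P)$; the trivial bound at places outside $S$ costs only another factor $C_4^{r_1+r_2}$. The product formula $\prod_P|q_1^{r_1}q_2^{r_2}T|_P=1$, after taking logarithms and optimizing the ratio $r_2/r_1$, produces an inequality of the form $\log H_2\le C_2\log H_1$ valid whenever $H_1\ge C_1$, with $C_1,C_2$ depending only on $k,f,\beta_1$.

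The main obstacle is the non-vanishing lemma itself: one must produce a differentiation multi-index $(i_1,i_2)$ with $T\neq 0$ whose weighted index is strictly smaller than the vanishing index of $R$ at $(\zeta,\zeta)$, and with enough of a gap that the resulting upper bound on $\prod_{P\in S}|T|_P$ beats the product-formula lower bound $\ge (q_1^{r_1}q_2^{r_2})^{-1}$. This is purely algebraic (no $P$-adic subtlety) and is precisely Siegel's refinement of Thue's method, pinning down the extremal value $\beta_0$. Every other ingredient --- Siegel's lemma, the Taylor-expansion estimate, the product formula and the height bookkeeping --- is routine once this step is in place, and each contributes constants depending only on $f,k,\beta_1$, giving the asserted uniformity in $S$ and in $(\Gamma_P)$.
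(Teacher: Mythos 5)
Your proposal follows essentially the same route as the paper: Siegel's two-variable auxiliary-polynomial construction adapted $P$-adically, Taylor estimates at each place of $S$ fed by \eqref{1.5}, and the product formula applied to the non-zero integer obtained by clearing denominators, with the normalization $\sum_{P\in S}\Gamma_P=1$ making the final constants independent of $S$ and of the $\Gamma_P$. The only deviation is in how non-vanishing is secured: Mahler (following Siegel) varies the auxiliary polynomial over a family $R_m$ of degree $m$ in $X$ and fixed degree $s$ in $Y$ and shows that some $R_m(p_1/q_1,p_2/q_2)\ne 0$, whereas you differentiate a single polynomial until a non-zero value appears; both are standard variants within the Thue--Siegel method and yield the same threshold $\beta_0$.
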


\begin{proof}[Sketch of proof]
Mahler's proof is an 
adaptation of that of Siegel \cite{Siegel1921}.
We give a very brief outline.
Assume that Theorem \ref{thm2} is false and take 
two rational solutions $p_1/q_1$, $p_2/q_2$ of \eqref{1.5}
such that $|p_1,q_1|\geq C_1$ and 
$\lambda:=\medfrac{\log |p_2,q_2|}{\log |p_1,q_1|}> C_2$.  
Let $r:=[\lambda ]+1$ and choose an integer $s$ with $1\leq s\leq n-1$ such that
$\medfrac{n}{s+1}+s=\beta_0$. Let $m$ be an integer with $m>r$ and $(m+1)(s+1)>nr$. 
Following Siegel, Mahler shows that there is
a polynomial $R_m(X,Y)$  
of degree at most $m$ in $X$ and degree at most $s$ in $Y$,
with integer coefficients of not too large size,
such that $R_m(p_1/q_1,p_2/q_2)\not= 0$ and 
$\medfrac{\partial^iR_m}{\partial X^i}(\zeta ,\zeta )=0$ for $i=0\kdots r-1$ and each root $\zeta$ of $f(X)$. Then
\[
R_m(X,Y)=F_m(X,Y,\zeta )(X-\zeta )^r+G_m(X,Y,\zeta )(X-\zeta)
\]
for each root $\zeta$ of $f(X)$, where $F_m(X,Y,Z)$, $G_m(X,Y,Z)$ are polynomials
with integral coefficients. Now $A_m:=q_1^mq_2^sR_m(p_1/q_1,p_2/q_2)$
is a non-zero integer, hence $\prod_{P\in S} |A_m|_P\geq 1$.
On the other hand, using that $p_1/q_1$, $p_2/q_2$ are solutions of \eqref{1.5}
and that $\beta >\beta_0$
one can deduce good upper bounds for $|A_m|_P$ for $P\in S$, and in fact show 
that there is $m$ with $\prod_{P\in S} |A_m|_P<1$.
This leads to a contradiction.
\end{proof}

Mahler used Theorem \ref{thm2} in \cite{Mahler1933b} 
to compute an upper bound for the
number of solutions of \eqref{1.5} or \eqref{1.2}. Another important
tool is the following simple observation that solutions of \eqref{1.5}
are far apart \cite[p. 40]{Mahler1933b},
nowadays called a \emph{gap principle}.

\begin{lem}\label{lem1.3}
Let $p_1/q_1$, $p_2/q_2$ be two distinct rational solutions of \eqref{1.5},
with $|p_2,q_2|\geq |p_1,q_1|> k^{1/\beta_1}$. Then
\[
|p_2,q_2|\geq \medfrac{1}{2k}\cdot |p_1,q_1|^{\beta_1-1}.
\]
\end{lem}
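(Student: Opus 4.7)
The plan is to form the nonzero integer $N := p_1 q_2 - p_2 q_1$ (nonzero because $p_1/q_1 \neq p_2/q_2$), exploit $|N|_\infty \geq 1$, and compare this with an upper bound on $|N|$ obtained by combining the information from \eqref{1.5} at every $P \in S$ via the product formula $\prod_{P \in M_\Qq} |N|_P = 1$.

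At each $P \in S$, the hypothesis $|p_i,q_i| > k^{1/\beta_1}$ forces $k|p_i,q_i|^{-\beta_1} < 1$, so the right-hand side of \eqref{1.5} is at most $1$ and the minimum there is realised by $|\zeta_P - p_i/q_i|_P$ itself. Writing $x_i := p_i/q_i$ and using $|p_1,q_1| \leq |p_2,q_2|$ together with $\Gamma_P \geq 0$, both solutions satisfy $|\zeta_P - x_i|_P \leq (k|p_1,q_1|^{-\beta_1})^{\Gamma_P}$. A triangle inequality --- ordinary at $P = \infty$, ultrametric at a prime --- then gives $|x_1 - x_2|_P \leq C_P(k|p_1,q_1|^{-\beta_1})^{\Gamma_P}$ with $C_P = 1$ at primes and $C_P = 2$ at $\infty$. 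Multiplying by $|q_1 q_2|_P \leq 1$ yields $|N|_P \leq (k|p_1,q_1|^{-\beta_1})^{\Gamma_P}$ for every prime $P \in S$; for primes $P \notin S$ one just uses $|N|_P \leq 1$.

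For the archimedean place, if $\infty \in S$ the triangle inequality delivers $|N|_\infty = q_1 q_2 |x_1-x_2|_\infty \leq 2|p_1,q_1||p_2,q_2|(k|p_1,q_1|^{-\beta_1})^{\Gamma_\infty}$; if $\infty \notin S$ one falls back on the trivial $|N|_\infty \leq |p_1|q_2 + |p_2|q_1 \leq 2|p_1,q_1||p_2,q_2|$, which is the same formula after formally setting $\Gamma_\infty := 0$. Feeding everything into $|N|_\infty \prod_{P \text{ prime}} |N|_P = 1$ and using $\sum_{P \in S}\Gamma_P = 1$ to collapse $(k|p_1,q_1|^{-\beta_1})^{\Gamma_\infty} \prod_{P \in S,\, P \ne \infty}(k|p_1,q_1|^{-\beta_1})^{\Gamma_P}$ to $k|p_1,q_1|^{-\beta_1}$, one arrives at $1 \leq 2k|p_1,q_1|^{1-\beta_1}|p_2,q_2|$, which rearranges to the claimed bound.

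The only point I anticipate as a genuine hurdle is the uniform treatment of the archimedean place: when $\infty \notin S$ or $\Gamma_\infty = 0$ there is no $P$-adic approximation at $\infty$ to feed into the triangle inequality, so one is forced to use the trivial archimedean estimate $2|p_1,q_1||p_2,q_2|$. It is the combinatorics of the exponents that makes everything work: the extra factor $|p_1,q_1|$ from that trivial bound is exactly compensated when the prime contributions are collapsed via $\sum_P \Gamma_P = 1$, producing the correct exponent $\beta_1 - 1$ of $|p_1,q_1|$ in the final inequality.
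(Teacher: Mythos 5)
Your proof is correct and follows essentially the same route as the paper: form the nonzero integer $p_1q_2-p_2q_1$, bound its absolute value at each place of $S\cup\{\infty\}$ via \eqref{1.5} and the (ultrametric or ordinary) triangle inequality, and combine with the product formula and $\sum_{P\in S}\Gamma_P=1$ to get $1\leq 2k|p_1,q_1|^{1-\beta_1}|p_2,q_2|$. The archimedean bookkeeping you single out (setting $\Gamma_\infty=0$ when $\infty\notin S$ and falling back on the trivial estimate) is exactly how the paper handles it, via its indicator $s(P)$ and the case split on $\Gamma_P=0$.
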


\begin{proof}
Write $s(\infty )=1$ and $s(P)=0$ if $P$ is a prime.
Let $S':=S\cup\{\infty\}$ and put $\Gamma_{\infty}:=0$ if $\infty\not\in S$.
Let $P\in S'$. Then if $\Gamma_P>0$ we have
$\big|\zeta_P-\medfrac{p_i}{q_i}\big|_P\leq (k\cdot |p_i,q_i|^{-\beta})^{\Gamma_P}$
for $i=1,2$ and so
\begin{eqnarray*}
|p_1q_2-p_2q_1|_P
&\leq& (|p_1,q_1|\cdot |p_2,q_2|)^{s(P)}\left|\medfrac{p_1}{q_1}-\medfrac{p_2}{q_2}\right|_P
\\
&\leq& (2|p_1,q_1|\cdot |p_1,q_2|)^{s(P)}\max \Big(\left|\medfrac{p_1}{q_1}-\zeta_P\right|_P,
\left|\medfrac{p_2}{q_2}-\zeta_P\right|_P\Big)
\\
&\leq& (2|p_1,q_1|\cdot |p_1,q_2|)^{s(P)}(k\cdot |p_1,q_1|^{-\beta_1})^{\Gamma_P}
\end{eqnarray*}
while if $\Gamma_P=0$ we have the trivial estimate 
\[
|p_1q_2-p_2q_1|_P\leq (2|p_1,q_1|\cdot |p_1,q_2|)^{s(P)}.
\]
By taking the product over $P\in S'$, using
$\prod_{P\in S'} |p_1q_2-p_2q_1|_P\geq 1$, we obtain
\[
1\leq 2k\cdot |p_1,q_1|^{1-\beta_1}\cdot |p_2,q_2|,
\]
which implies our lemma.
\end{proof}

Assume $n=\deg f\geq 3$.
Combining Theorem \ref{thm2} with Lemma \ref{lem1.3}, one easily deduces that
the number of solutions of \eqref{1.5} is at most a quantity
depending only on $k$, $f$ and $\beta_1$. Invoking \eqref{1.7},
we arrive at the following 
(see \cite[pp. 46--47, Hilfssatz 3]{Mahler1933b}):

\begin{thm}\label{thm1.4} (i) System \eqref{1.5} has at most $C_3$ 
solutions in rationals $p/q$ with $|p,q|>k^{1/\beta_1}$, 
where $C_3$ is effectively computable and depends only on $k$, $f$ and $\beta_1$;
\\
(ii) Inequality \eqref{1.2} has at most $C_3\cdot 2^{\frac{\beta}{\beta-\beta_1}\cdot (t+1)}$ solutions in rationals $p/q$
with $|p,q|>k^{1/\beta_1}.$
\end{thm}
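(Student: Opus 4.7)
My plan is to separate the proof into two steps: count the solutions of a single system \eqref{1.5} by combining Theorem \ref{thm2} with Lemma \ref{lem1.3}, then multiply by the number of systems enumerated in \eqref{1.7}.

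For part (i), I would sort the rational solutions of \eqref{1.5} with $|p,q|>k^{1/\beta_1}$ by height $H_1<H_2<\cdots<H_N$. First note from \eqref{1.1} that $n\geq 3$ forces $\beta_0\geq 5/2$ (the minimum is attained at $s=1$ when $n=3$), so $\beta_1>2$ and $\delta:=\beta_1-1>1$. The crucial technical step is the change of variable $y_i:=\log H_i-a$ with $a:=\log(2k)/(\beta_1-2)\geq 0$: Lemma \ref{lem1.3} reads $\log H_{i+1}\geq\delta\log H_i-\log(2k)$, which by direct substitution becomes $y_{i+1}\geq\delta y_i$. Iterating, as soon as $\log H_{i_0}\geq 2a$ (equivalently $H_{i_0}\geq(2k)^{2/(\beta_1-2)}$), this yields $\log H_N\geq\frac{1}{2}\delta^{N-i_0}\log H_{i_0}$.

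I would then set $B:=\max\bigl(C_1,(2k)^{2/(\beta_1-2)}\bigr)$, a constant depending only on $k,f,\beta_1$. Solutions with $H_i\leq B$ number at most $(2B+1)(B+1)$ by the trivial count of coprime pairs $(p,q)$ with $|p|,q\leq B$. For the remaining indices $i_0<\cdots<N$ with $H_{i_0}>B\geq C_1$, Theorem \ref{thm2} gives $\log H_N\leq C_2\log H_{i_0}$, while the iterated gap principle just established gives $\log H_N\geq\frac{1}{2}\delta^{N-i_0}\log H_{i_0}$; comparing yields $N-i_0\leq\log(2C_2)/\log\delta$. Summing the two contributions produces the required $C_3=C_3(k,f,\beta_1)$. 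For part (ii), the reduction preceding Theorem \ref{thm2} shows that every solution of \eqref{1.2} with $|p,q|>k^{1/\beta_1}$ is a solution of one of the $|\SS|$ systems of type \eqref{1.5}, so part (i) applied to each system combined with the estimate $|\SS|\leq 2^{\beta(t+1)/(\beta-\beta_1)}$ from \eqref{1.7} yields the total bound $C_3\cdot 2^{\beta(t+1)/(\beta-\beta_1)}$.

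The main obstacle I anticipate is that the gap principle $H_{i+1}\geq(2k)^{-1}H_i^{\beta_1-1}$ does not by itself force $H_{i+1}/H_i$ away from $1$ when $H_i$ is only slightly above $k^{1/\beta_1}$; the affine shift $y_i=\log H_i-a$ absorbs the additive constant $\log(2k)$ and converts the inhomogeneous recursion into the clean geometric one, after which the finitely many ``small'' solutions with $H_i\leq B$ can be swept up by the trivial pair count. The preliminary verification that $\beta_1>2$ under $n\geq 3$ is also essential --- without it the exponent $\delta=\beta_1-1$ does not exceed $1$ and the iteration fails to produce exponential growth in $\log H_i$.
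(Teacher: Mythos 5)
Your proposal is correct and follows exactly the route the paper intends: the paper itself only remarks that Theorem \ref{thm1.4} follows by ``combining Theorem \ref{thm2} with Lemma \ref{lem1.3}'' and then invoking \eqref{1.7}, and your argument is a careful implementation of precisely that combination (trivial count below an explicit threshold, iterated gap principle against the bound $|p_2,q_2|\le|p_1,q_1|^{C_2}$ above it, then multiplication by $|\SS|$). The only cosmetic point is that distinct solutions may share a height, but as your own estimate shows this can happen only for $|p,q|\le(2k)^{1/(\beta_1-2)}\le B$, where the trivial pair count already absorbs them.
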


Theorems \ref{thm1.1} and \ref{thm1.4} have been generalized and refined in several respects.
Already Siegel \cite{Siegel1921} considered approximation of algebraic
numbers by elements from a given number field.
In his PhD-work from 1939, published much later in a journal
\cite{Parry1950}, Parry proved a common generalization of
the results of Siegel and Mahler for inequalities over a given number field
involving various archimedean and non-archimedean absolute values.
Roth \cite{Roth1955} and Ridout \cite{Ridout1957} extended the results
of Siegel and Mahler from
$\beta >\beta_0$  to $\beta >2$.
Lang \cite{Lang1960} extended this further to number fields,
thereby also improving Parry's result.

Also there has been much work on estimating the number of approximants.
Davenport and Roth \cite{DavenportRoth1955} were the first to give an upper
bound for the number of solutions of $|\zeta -p/q|\leq q^{-\beta}$ 
in rational numbers $p/q$, where $\zeta$ is a real algebraic number
and $\beta$ is any real $>2$. Their bound was improved by Mignotte
and then further by Bombieri and van der Poorten.
We give the up to now best quantitative result for the number of solutions
of a system \eqref{1.5}, which can be deduced from a more general result of
Bugeaud and Evertse \cite[Prop. A.1]{BugeaudEvertse2008} over number fields.
The height $H(P)$ of a polynomial $P$ with integer coefficients is the maximum
of the absolute values of its coefficients. 

\begin{thm}\label{thm1.5}
Let $k\geq 1$,  let 
$f(X)\in \Zz[X]$ be a polynomial of degree $n\geq 3$,  let $S$ be a finite subset of $M_{\Qq}$,
for $P\in S$ let $\zeta_P$ be a zero of $f(X)$ in $\overline{\Qq_P}$,
let $\beta_1>2$,
and let $\Gamma_P$ ($P\in S$) be non-negative reals with $\sum_{P\in S}\Gamma_P=1$. Put $\delta :=1-2/\beta_1$. 
Then the number of $p/q\in\Qq$ with 
\begin{eqnarray*}
&|p,q|\geq (4k)^{2/(\beta_1-2)}H(f),&
\\[0.2cm]
&\min \Big(1,\left|\zeta_P-\medfrac{p}{q}\right|_P\Big)\leq (k\cdot |p,q|^{-\beta_1})^{\Gamma_P}\ \mbox{for }
P\in S&
\end{eqnarray*}
is at most
$2^{30}\delta^{-3}\log 3n\cdot\log (\delta^{-1}\log 3n)$.
\end{thm}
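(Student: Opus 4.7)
The plan is to follow the structure of quantitative refinements of Roth's theorem due to Davenport--Roth and Bombieri--van der Poorten, in the $P$-adic formulation of Bugeaud--Evertse over $\Qq$. The central idea is to separate the solutions into a \emph{medium} range, controlled by the gap principle, and a \emph{large} range, controlled by a multi-variable auxiliary polynomial argument, and then to optimize the threshold between the two ranges.

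First I would fix a threshold $B$ and partition the set of solutions into those with $(4k)^{2/(\beta_1-2)}H(f)\le |p,q|<B$ (medium) and those with $|p,q|\ge B$ (large). For the medium solutions, the gap principle of Lemma \ref{lem1.3} applies verbatim to the present system of inequalities, since the only inputs to its proof are the inequality in the hypothesis and $\sum_{P\in S}\Gamma_P=1$. Ordering the medium solutions by increasing height gives
\[
|p_{i+1},q_{i+1}|\ge \medfrac{1}{2k}\cdot |p_i,q_i|^{\beta_1-1},
\]
and since $\beta_1-1>1$ this forces doubly-exponential growth of the heights, so the number of medium solutions is $O(\delta^{-1}\log\log B)$.

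For the large range, which is the main technical work, I would suppose for contradiction that there are $M$ large solutions $p_1/q_1,\dots,p_M/q_M$ arranged in increasing height, and construct an auxiliary polynomial $R(X_1,\dots,X_M)\in\Zz[X_1,\dots,X_M]$ of prescribed multi-degree $(r_1,\dots,r_M)$ and controlled height, vanishing to high weighted order at every diagonal point $(\zeta,\dots,\zeta)$ with $f(\zeta)=0$. The multi-degrees $r_i$ are chosen geometrically decreasing so that $r_i\log|p_i,q_i|$ is roughly constant, matching the multi-degree to the gaps between successive heights. Upper bounds for $|R(p_1/q_1,\dots,p_M/q_M)|_P$ at each $P\in S\cup\{\infty\}$ follow from Taylor expansion at the nearby diagonal point combined with the approximation hypothesis; the normalization $\sum_{P\in S}\Gamma_P=1$ is exactly what collapses all the $P$-adic contributions into a single exponent, just as in the passage from \eqref{1.5} to Lemma \ref{lem1.3}. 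Applying the product formula to the non-zero integer $q_1^{r_1}\cdots q_M^{r_M}R(p_1/q_1,\dots,p_M/q_M)$ then yields $M\le C_1\delta^{-2}\log 3n$.

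The hardest step will be guaranteeing the non-vanishing of $R$, or more precisely of an appropriate generalized derivative, at the chosen $M$-tuple: this is the $P$-adic analogue of Roth's lemma in the Dyson--Esnault--Viehweg--Evertse form. Its recursive application is what produces two of the three factors of $\delta^{-1}$ together with the $\log 3n$. Finally, balancing the medium count $O(\delta^{-1}\log\log B)$ against the size of $B$ forced by the large-range construction contributes the outer $\log(\delta^{-1}\log 3n)$ factor and absorbs the explicit lower bound $(4k)^{2/(\beta_1-2)}H(f)$, yielding the claimed bound $2^{30}\delta^{-3}\log 3n\cdot\log(\delta^{-1}\log 3n)$.
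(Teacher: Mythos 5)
First, a point of reference: the paper does not prove Theorem \ref{thm1.5} at all --- it is quoted as a special case of \cite[Prop.\ A.1]{BugeaudEvertse2008}, so your attempt can only be compared with the proof in that reference. Your overall strategy (a gap principle for closely spaced solutions combined with a many-variable Thue--Siegel--Roth auxiliary polynomial whose non-vanishing is secured by Roth's lemma) is indeed the strategy of the cited proof, and your observation that Lemma \ref{lem1.3} carries over verbatim (only the extended absolute value and $\sum_{P\in S}\Gamma_P=1$ are used, so $\zeta_P\in\overline{\Qq_P}$ causes no trouble) is correct; likewise the role of the threshold $(4k)^{2/(\beta_1-2)}H(f)$ is exactly to make each application of the gap principle raise the height to the power at least $\beta_1/2=1/(1-\delta)$.

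The genuine gap is in your treatment of the large range. You propose to take \emph{all} $M$ large solutions, build one auxiliary polynomial $R(X_1,\dots,X_M)$, and conclude $M\le C_1\delta^{-2}\log 3n$ from the product formula. This step fails: the non-vanishing statement (Roth's lemma) requires the successive log-heights to grow geometrically, $\log|p_{j+1},q_{j+1}|\ge E\,\log|p_j,q_j|$ with $E$ large (of size roughly $\delta^{-1}\log 3n$), and solutions above any fixed threshold $B$ need not satisfy this --- they can cluster with consecutive height ratios as small as the gap principle allows. The correct architecture is therefore not a single cut at $B$ but a two-level decomposition of the whole range: select a maximal subsequence whose log-heights grow by the factor $E$ at each step (the auxiliary-polynomial argument bounds its length by $O(\delta^{-2}\log 3n)$), and then count the solutions falling in each window $[\,\log H, E\log H)$ between consecutive selected points by iterating the gap principle, giving $O(\delta^{-1}\log E)$ per window. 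The product of these two counts, with $E\asymp\delta^{-1}\log 3n$, is what produces $\delta^{-3}\log 3n\cdot\log(\delta^{-1}\log 3n)$; in your version the factor $\log(\delta^{-1}\log 3n)$ is attributed to a ``medium versus $B$'' balancing that does not actually close, since the large-range count is unbounded as you have set it up. Beyond this structural issue, the proposal is of course only a plan: none of the quantitative work (choice of multi-degrees, index computation, the explicit constant $2^{30}$) is carried out.
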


As said, Bugeaud and Evertse proved a generalization of this result over number
fields. There are further, higher dimensional generalizations, namely,
quantitative versions of Schmidt's Subspace Theorem \cite{Schmidt1989} 
and generalizations by Schlickewei \cite{Schlickewei1992}, which allow the unknowns
to be taken from a number field and which involve both archimedean and
non-archimedean absolute values. Evertse and Schlickewei and lastly
Evertse and Ferretti obtained various
sharpenings of Schlickewei's result. We refer only to \cite{EvertseFerretti2013}, which contains the sharpest result,
as well as a historical overview of the subject.

\section{Thue-Mahler equations and $S$-unit equations}\label{section2}

Let $F(X,Y)=a_0X^n+a_1X^{n-1}Y+\cdots +a_nY^n\in\Zz [X,Y]$
be an irreducible (i.e., over $\Qq$) binary form of degree $n\geq 3$.
Thue \cite{Thue1909} proved that for every positive integer $m$,
the equation
\begin{equation}\label{2.1}
F(p,q)=m
\end{equation}
has only finitely many solutions in integers $p,q$.

Mahler considerably extended this result.
Let $\beta>\beta_0$ where as before $\beta_0=\min_{s=1\kdots n-1}\big(\medfrac{n}{s+1}+s\big)$ and let $P_1\kdots P_t$ 
be distinct prime numbers.
Put $S:=\{\infty, P_1\kdots P_t\}$.
Mahler considered the inequality
\begin{equation}\label{2.2}
\prod_{P\in S} |F(p,q)|_P\leq |p,q|^{n-\beta}
\end{equation}
to be solved in integers $p,q$ with $\gcd (p,q)=1$.
In \cite[Satz 2]{Mahler1933a} he proved that this inequality has
only finitely many solutions.
In \cite[Satz 6]{Mahler1933b}
he gave an upper bound for the number of solutions of \eqref{2.2}
which we state here in a simplified form.

\begin{thm}\label{thm2.1}
The number of solutions of \eqref{2.2} is at most $C_4^{t+1}$, where
$C_4$ is an effectively computable number that depends only on $F$ and $\beta$,
and so is independent of $P_1\kdots P_t$.
\end{thm}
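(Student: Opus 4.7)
I would reduce \eqref{2.2} to finitely many inequalities of the shape \eqref{1.2} and apply Theorem \ref{thm1.4}(ii) to each. Write $f(X):=F(X,1)=a_0\prod_{i=1}^n(X-\zeta_i)$ with $\zeta_1,\ldots,\zeta_n\in\overline{\Qq}$, and for each $P\in S$ fix an embedding $\overline{\Qq}\hookrightarrow\overline{\Qq_P}$, so that the $\zeta_i$ become elements of $\overline{\Qq_P}$. Given a solution $(p,q)$ of \eqref{2.2}, for each $P\in S$ choose $i_P\in\{1,\ldots,n\}$ minimising $|p/q-\zeta_i|_P$. This assigns to $(p,q)$ a \emph{type} $\tau=(i_P)_{P\in S}\in\{1,\ldots,n\}^{S}$, so there are at most $n^{t+1}$ types in total, and I would group the solutions according to $\tau$.

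The core step is to show that the solutions of any fixed type satisfy, for a constant $k_0=k_0(F)\geq 1$,
\[
\prod_{P\in S}\min\Big(1,\,\bigl|\zeta_{i_P}-\medfrac{p}{q}\bigr|_P\Big)\leq k_0\cdot|p,q|^{-\beta}.
\]
Starting from $|F(p,q)|_P=|a_0|_P\cdot |q|_P^n\prod_i|p/q-\zeta_i|_P$, I would bound each factor $|p/q-\zeta_i|_P$ with $i\neq i_P$ from below by $c_{P,i}\cdot|\zeta_{i_P}-\zeta_i|_P$: ultrametricity gives $c_{P,i}=1$ for non-archimedean $P$ (using that $\zeta_{i_P}$ is the $P$-nearest root), while the archimedean case yields $c_{P,i}\geq\tfrac12$ once $|p/q-\zeta_{i_P}|_\infty$ is smaller than half the minimal root separation (the opposite case produces only finitely many $(p,q)$, bounded in terms of $F$). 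Taking the product over $P\in S$ and using the product formula for the algebraic numbers $\zeta_i-\zeta_{i_P}$ over their splitting field, the quantity $\prod_{P\in S}|\zeta_i-\zeta_{i_P}|_P$ is turned into a constant depending only on $F$, since the omitted finite places contribute factors $\leq 1$. Combined with \eqref{2.2}, this yields the displayed inequality.

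I then apply Theorem \ref{thm1.4}(ii) to each type, where a minor extension allows $\zeta_P\in\overline{\Qq_P}$ (replacing $|p/q-\zeta_P|_P$ by the $e_P$-th root of $|N_{\Qq_P(\zeta_P)/\Qq_P}(p/q-\zeta_P)|_P$ with $e_P\leq n$ enlarges $C_3$ only by a factor depending on $n$). This produces at most $C_3\cdot 2^{\frac{\beta}{\beta-\beta_1}(t+1)}$ solutions of each type with $|p,q|>k_0^{1/\beta_1}$, and multiplying by the $n^{t+1}$ types gives at most $C_3\cdot(2^{\beta/(\beta-\beta_1)}n)^{t+1}$ such solutions. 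Solutions with $|p,q|\leq k_0^{1/\beta_1}$ are trivially bounded in number by a function of $F$ and $\beta$ alone (indeed the product formula gives $\prod_{P\in S}|F(p,q)|_P\geq 1$ whenever $F(p,q)\neq0$, so \eqref{2.2} is automatic for any such $(p,q)$ with $\beta\leq n$, and the number of coprime pairs in that range is absolutely bounded). Absorbing everything into $C_4=C_4(F,\beta)$ yields the claimed bound $C_4^{t+1}$. The main obstacle is the bookkeeping of the second paragraph: one must verify carefully that $k_0$ depends only on $F$, which is exactly where the product formula on the splitting field of $f$ is essential—without it, the root-separation product $\prod_{P\in S}|\zeta_i-\zeta_{i_P}|_P$ could in principle shrink with $S$ and spoil the $S$-uniformity.
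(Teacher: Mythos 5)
Your proposal is correct and follows essentially the same route as the paper: reduce \eqref{2.2} to finitely many instances of \eqref{1.2} by bounding $|F(p,q)|_P$ from below in terms of the distance from $p/q$ to its $P$-adically nearest root, with the key point (which you correctly isolate and justify via the product formula on the splitting field) being that the resulting constant $k_0$ is independent of $S$, and then apply Theorem \ref{thm1.4}(ii) to each of the at most $n^{t+1}$ choices of nearest roots. The only deviation is that you work with roots in $\overline{\Qq_P}$ and hence need the extension of Theorem \ref{thm1.4} you flag, whereas Mahler restricts to roots $\zeta_P\in\Qq_P$ and absorbs the places where $f$ has no $\Qq_P$-rational root into the constant (setting $M_{F,P}=1$ there), so that Theorem \ref{thm1.4} applies as stated.
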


\begin{proof}[Sketch of proof]
Without loss of generality we consider solutions with $q>0$.
Put $f(X):=F(X,1)$. 
Then by an elementary argument Mahler \cite[pp. 716--722]
{Mahler1933a} shows that for every 
$P\in M_{\Qq}=\{\infty\}\cup\{ {\rm primes}\}$ there is
$c_{F,P}$ with $0<c_{F,P}\leq 1$ such that
if $p,q$ are any two coprime integers with $q>0$, then 
\[
\begin{array}{ll}
|F(p,q)|_P \geq c_{F,\infty}M_{F,\infty}|p,q|^n\ \ 
&\mbox{if $P=\infty$,}
\\
|F(p,q)|_P \geq c_{F,P}M_{F,P}\ \ &\mbox{if $P$ is a prime,}
\end{array}
\]
where
\[
M_{F,P}=\left\{
\begin{array}{l}
1\ \ \mbox{if $f(X)$ has no zeros in $\Qq_P$,}
\\[0.15cm]
\min \Big(1,\left|\zeta_P-\medfrac{p}{q}\right|_P\Big)\ \ 
\mbox{otherwise,}
\end{array}\right.
\]
with $\zeta_P$ the zero of $f$ in $\Qq_P$ $P$-adically closest to $p/q$.
Further, $c_{F,P}=1$ for all but finitely many $P$.
Hence $c_F:=\prod_{P\in M_{\Qq}} c_{F,P}$ is positive. 
Now let $S'$ be the subset of $P\in S$ such that $f(X)$ has roots in $\Qq_P$.
Let $(p,q)$ be a solution of \eqref{1.2} and for $P\in S'$, 
let $\zeta_P$ be as above. Then
\[
|p,q|^{n-\beta}\geq c_F\cdot |p,q|^n\prod_{P\in S'}\min\Big(1,\left|\zeta_P-\medfrac{p}{q}\right|_P\Big),
\]
that is, $(p,q)$ satisfies \eqref{1.2} with $k=c_F^{-1}$ and $S'$ in place
of $S$ for certain roots $\zeta_P\in \Qq_P$ of $f(X)$.
Now Theorem \ref{thm2.1} follows from Theorem \ref{thm1.4},
taking into consideration that for every $P\in S'$ there are at most
$n$ possibilities for $\zeta_P$.
\end{proof}

We state some immediate consequences. We keep the assumption
that $F$ is a binary form with integer coefficients that is irreducible
over $\Qq$.
First note that if in \eqref{2.2} we take $\beta =n$,
we get $\prod_{P\in S} |F(p,q)|_P=1$ and thus,
\begin{equation}\label{2.3}
|F(p,q)|=P_1^{z_1}\cdots P_t^{z_t}
\end{equation}
where $z_1\kdots z_t$ are non-negative integers. This equation is nowadays
called the \emph{Thue-Mahler} equation.
Then from Theorem \ref{thm2.1} we immediately deduce
(cf. \cite[Folgerung 1, p. 52]{Mahler1933b}): 

\begin{cor}\label{cor2.2}
Eq. \eqref{2.3} has at most $C_5^{t+1}$ solutions in integers
$p,q$, $z_1\kdots z_t$ with $\gcd (p,q)=1$, where
$C_5$ is effectively computable and depends only on $F$.
\end{cor}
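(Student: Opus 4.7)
The plan is to deduce the corollary directly from Theorem~\ref{thm2.1} by choosing the critical exponent $\beta = n$ in the inequality \eqref{2.2}. First I would check that this choice is admissible. Since $n\geq 3$, taking $s=1$ in the definition of $\beta_0$ gives $\beta_0 \leq \medfrac{n}{2}+1 < n$, so $\beta = n > \beta_0$ and Theorem~\ref{thm2.1} applies.

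Next I would verify that every solution of \eqref{2.3} is a solution of the inequality \eqref{2.2} with $\beta=n$ and $S = \{\infty, P_1, \ldots, P_t\}$. Because $F$ is irreducible over $\Qq$ of degree $n\geq 3$, $F(p,q)\neq 0$ for every coprime integer pair $(p,q)\neq (0,0)$; in particular $F(p,q)$ is a nonzero integer. Assuming $|F(p,q)| = P_1^{z_1}\cdots P_t^{z_t}$ with $z_i\geq 0$, we have $|F(p,q)|_{\infty} = \prod_i P_i^{z_i}$ and $|F(p,q)|_{P_i} = P_i^{-z_i}$, so the product formula gives
\[
\prod_{P\in S}|F(p,q)|_P \;=\; 1 \;=\; |p,q|^{\,n-\beta},
\]
confirming \eqref{2.2}. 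Hence the number of coprime pairs $(p,q)$ arising from solutions of \eqref{2.3} is at most the number of coprime solutions of \eqref{2.2} with $\beta=n$, which by Theorem~\ref{thm2.1} is bounded by $C_4^{t+1}$ for a constant $C_4$ depending only on $F$ and $\beta=n$, hence only on $F$.

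Finally, given $(p,q)$, the exponents $z_1,\ldots,z_t$ are uniquely determined by the prime factorization of $|F(p,q)|$, so the number of tuples $(p,q,z_1,\ldots,z_t)$ equals the number of $(p,q)$. Setting $C_5 := C_4$ therefore finishes the proof.

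There is no real obstacle here: the content is already present in Theorem~\ref{thm2.1}. The only points that need a moment's care are the admissibility of the boundary choice $\beta = n$ (handled by the $s=1$ term in $\beta_0$ since $n\geq 3$) and the reinterpretation of \eqref{2.3} via the product formula, which turns an equation at the archimedean place into an $S$-adic equality. Everything else is bookkeeping.
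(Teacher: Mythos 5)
Your proof is correct and follows essentially the same route as the paper: specialize \eqref{2.2} to $\beta=n$ (admissible since $\beta_0\le \frac{n}{2}+1<n$ for $n\ge 3$), observe via the product formula that solutions of \eqref{2.3} are exactly the coprime solutions of that inequality, and invoke Theorem~\ref{thm2.1}. The remarks that $F(p,q)\ne 0$ for coprime $(p,q)\ne(0,0)$ and that the exponents $z_1\kdots z_t$ are determined by $(p,q)$ are exactly the right bookkeeping.
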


Another quick consequence gives an upper bound for the number of
solutions of \eqref{2.1}. Let $P_1\kdots P_t$ denote the primes
dividing $m$. 
Let $(p,q)$ be a solution of \eqref{2.1} and let
$(p',q')=(p/d,q/d)$
where $d=\gcd (p,q)$. Then $|F(p',q')|$ is composed of 
primes from $P_1\kdots P_t$. Further, given $(p',q')$ there is at most one
positive integer $d$ such that $(p,q):=(dp',dq')$ satisfies \eqref{2.1}.
Thus, we obtain the following quantitative version of Thue's Theorem:

\begin{cor}\label{cor2.3}
Eq. \eqref{2.1} has at most $C_5^{\omega (m)+1}$ solutions
in integers $p,q$, where
$\omega (m)$ is the number of prime divisors of $m$.
\end{cor}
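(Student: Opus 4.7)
The approach is precisely the one foreshadowed in the paragraph immediately preceding the statement: reduce the equation $F(p,q)=m$ to the Thue--Mahler equation \eqref{2.3} by factoring out $\gcd(p,q)$, and then invoke Corollary \ref{cor2.2}.

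First I would let $P_1,\dots,P_t$ with $t=\omega(m)$ be the distinct prime divisors of $m$. To each solution $(p,q)$ of \eqref{2.1} I associate its primitive quotient: set $d:=\gcd(p,q)>0$ and $(p',q'):=(p/d,q/d)$, so that $\gcd(p',q')=1$ and, by the homogeneity of $F$,
\[
d^n F(p',q')=F(p,q)=m.
\]
Hence $F(p',q')$ is a non-zero integer dividing $m$, so $|F(p',q')|=P_1^{z_1}\cdots P_t^{z_t}$ for some non-negative integers $z_1,\dots,z_t$. Thus $(p',q')$ is a primitive solution of the Thue--Mahler equation \eqref{2.3} associated to $F$ and the prime set $\{P_1,\dots,P_t\}$.

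Next I would show that the map $(p,q)\mapsto (p',q')$ is injective. Given a primitive pair $(p',q')$ with $F(p',q')\mid m$, the relation $d^n=m/F(p',q')$ pins $d>0$ down uniquely whenever such a $d$ exists, and then $(p,q)=(dp',dq')$ is recovered. Therefore the number of solutions of \eqref{2.1} is bounded above by the number of primitive solutions of \eqref{2.3} for the primes $P_1,\dots,P_t$, and Corollary \ref{cor2.2} yields the bound $C_5^{t+1}=C_5^{\omega(m)+1}$ claimed.

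There is essentially no obstacle in this argument; the only minor point to verify is that $F(p',q')\neq 0$, which is immediate from the irreducibility of $F$ over $\Qq$ (hence $F$ has no rational zeros) together with $(p',q')\neq (0,0)$. All the real work has already been done in Theorem \ref{thm2.1} and Corollary \ref{cor2.2}; this corollary is just a clean packaging of that count.
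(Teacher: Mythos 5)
Your proposal is correct and follows exactly the argument the paper sketches in the paragraph preceding the corollary: divide out $d=\gcd(p,q)$ to land on the Thue--Mahler equation \eqref{2.3} for the primes dividing $m$, observe that $d$ is uniquely determined by $d^n=m/F(p',q')$ so the passage to primitive pairs is injective, and apply Corollary \ref{cor2.2}. The extra check that $F(p',q')\neq 0$ is a harmless added detail.
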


We give another result of Mahler, which for later purposes we have reformulated
in more modern language. 
Let $S:=\{ P_1\kdots P_t\}$ be a set of primes.
An \emph{$S$-unit} is a 
rational number of the shape $\pm P_1^{z_1}\cdots P_t^{z_t}$ with 
$z_1\kdots z_t\in\Zz$. 
Consider the so-called \emph{$S$-unit equation}
\begin{equation}\label{2.4}
x+y=1\ \ \mbox{in $S$-units } x,y.
\end{equation}
Given such a solution, we may write $x=A/C$, $y=B/C$ where $A,B,C$
are integers with $\gcd (A,B,C)=1$, all composed of primes from $S$.
Subsequently, we can write $A=ap^5$, $B=bq^5$, 
with $a,b$
from finite sets independent of $A,B$ and $p,q$ coprime integers.
Thus, \eqref{2.4} becomes $|ap^5+bq^5|=P_1^{z_1}\cdots P_t^{z_t}$
with $z_1\kdots z_t$ non-negative integers. 
If $-a/b$ is not the fifth power 
of a rational number then the binary form
$aX^5+bY^5$ is irreducible, in which case we can directly apply
Corollary \ref{cor2.2}. Otherwise, there is a factorization 
$aX^5+bY^5=(a'X+b'Y)G(X,Y)$ where $a',b'$ are integers
and $G(X,Y)$ is an irreducible binary form of degree $4$ with integral coefficients,
and then \eqref{2.4} leads to an equation
$|G(p,q)|=P_1^{z_1}\cdots P_t^{z_t}$
to which Corollary \ref{cor2.2} can be applied. This leads to:

\begin{cor}\label{cor2.4}
Equation \eqref{2.4} has only finitely many solutions.
\end{cor}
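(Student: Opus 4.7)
My plan is to reduce the $S$-unit equation \eqref{2.4} to a Thue-Mahler equation of the form \eqref{2.3} and then invoke Corollary~\ref{cor2.2}, essentially following the path sketched in the excerpt.

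First I would clear denominators. Given a solution $(x,y)$ of \eqref{2.4}, write $x=A/C$, $y=B/C$ with $A,B,C\in\Zz$ and $\gcd(A,B,C)=1$. From $A+B=C$ one immediately sees that $A,B,C$ are pairwise coprime: any prime dividing two of them divides the third and hence $\gcd(A,B,C)$. Since $x$ and $y$ are $S$-units and the fractions are in lowest terms, $A,B,C$ are all composed only of primes from $S=\{P_1,\ldots,P_t\}$.

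Next I would strip off fifth powers. Write $A=\pm ap^5$, $B=\pm bq^5$, where $a,b$ are the fifth-power-free parts of $|A|,|B|$. Because the prime factors of $a,b$ lie in $S$, each of $a,b$ takes only finitely many values (at most $5^t$ each), and $\gcd(A,B)=1$ forces $\gcd(p,q)=1$. The equation $A+B=C$ then reads
\[
|ap^5+bq^5|=P_1^{z_1}\cdots P_t^{z_t}
\]
for non-negative integers $z_i$, possibly after adjusting signs of $a,b$.

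The heart of the argument is a case distinction on the binary form $F_{a,b}(X,Y):=aX^5+bY^5$. If $-a/b$ is not a fifth power in $\Qq$, then $F_{a,b}$ is irreducible over $\Qq$ of degree $5\ge 3$, and Corollary~\ref{cor2.2} directly bounds the number of coprime $(p,q)$. Otherwise $-a/b=c^5$ for some $c\in\Qq$, and one has a factorization $F_{a,b}(X,Y)=(a'X+b'Y)\cdot G(X,Y)$ with $a',b'\in\Zz$ and $G\in\Zz[X,Y]$ an irreducible binary form of degree~$4$; irreducibility of $G$ reflects the irreducibility of the fifth cyclotomic polynomial $\Phi_5$. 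Since the product $(a'p+b'q)\cdot G(p,q)$ is an $S$-integer, so is $G(p,q)$, and Corollary~\ref{cor2.2} applied to $G$ again yields finitely many $(p,q)$.

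Assembling: finitely many pairs $(a,b)$ each give finitely many coprime $(p,q)$, hence finitely many $(A,B)$, and then $C=A+B$ and $(x,y)=(A/C,B/C)$ are determined, so \eqref{2.4} has only finitely many solutions. The point that demands most care is the reducible case, where one must choose the extracted power large enough that every irreducible factor of $aX^n+bY^n$ still has degree $\ge 3$; this fails for $n=3,4$ (the nontrivial cyclotomic factors have degree~$2$) but succeeds for $n=5$, which is why fifth powers are the natural choice.
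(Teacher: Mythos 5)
Your proposal is correct and follows essentially the same route as the paper: clear denominators to get $A+B=C$ with $A,B,C$ composed of primes from $S$, extract fifth powers to reduce to $|ap^5+bq^5|=P_1^{z_1}\cdots P_t^{z_t}$ with $(a,b)$ from a finite set, and apply Corollary~\ref{cor2.2} either to the irreducible quintic form or to its irreducible quartic factor. Your added justifications (pairwise coprimality of $A,B,C$, the divisibility argument for $G(p,q)$, and the explanation via $\Phi_5$ of why fifth powers rather than cubes or fourth powers are extracted) are all sound and merely make explicit what the paper leaves implicit.
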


With Mahler's results one can compute an upper bound for the number of
solutions of \eqref{2.4}, but this does not lead to anything interesting.

In 1961, Lewis and Mahler \cite{LewisMahler1961} obtained
explicit versions of Corollaries \ref{cor2.2} and \ref{cor2.4}.
In this paper they proved that if $F(X,Y)$ is a not necessarily
irreducible binary form with integer
coefficients of degree $n\geq 3$ with non-zero discriminant and
with $F(1,0)F(0,1)\not= 0$, then \eqref{2.3} has at most
\begin{equation}\label{2.5}
(c_1nH(F))^{\sqrt{n}}+(c_2n)^{t+1}
\end{equation}
solutions, where $c_1,c_2$ are absolute constants. By means of the 
argument described above, they derived an upper bound for the number of
solutions of \eqref{2.4} depending on the primes in the set $S$,
and they posed as an open problem to obtain an upper bound
depending only on the cardinality of $S$. However, the condition
$F(1,0)F(0,1)\not= 0$ is not necessary in the result of Lewis and Mahler.
Indeed, there are integers $u,v\in\{ 0\kdots n-1\}$ such that $F(1,u)\not= 0$
and $F(v,uv+1)\not= 0$. Put $G(X,Y):=F(X+vY,uX+(uv+1)Y)$.
Then $G(1,0)G(0,1)\not=0$, and the number of solutions of \eqref{2.3}
does not change if we replace $F$ by $G$, since $G$ is obtained from $F$
by means of a transformation from ${\rm SL}_2(\Zz )$.
So we can apply the result of Lewis
and Mahler with $F(X,Y)=XY(X+Y)$
and deduce at once that \eqref{2.4} has at most $c_3^{t+1}$ solutions,
where $c_3$ is an absolute constant.  
Erd\H{o}s, Stewart and Tijdeman showed in 1998 \cite{EST} that this estimate cannot be improved that much. Let $\epsilon$ be a positive real number. They proved that if $t$ is sufficiently large in terms of $\epsilon$ then there exist a set of primes $S=\{ P_1\kdots P_t\}$ such that \eqref{2.4} 
has at least $\exp ((4-\epsilon)(t/\log t)^{1/2}) $ solutions. 
In 2007 Konyagin and Soundararajan \cite{KS} improved the lower bound to $\exp(t^{2-\sqrt2 - \epsilon})$.

After the result of Lewis and Mahler it remained as an open problem
whether \eqref{2.5} can be replaced by a bound depending
only on $n=\deg F$ and $t$, so independent of the height of $F$.
In his PhD-thesis \cite{Evertse1983} Evertse established such a bound, 
though with a much worse dependence on $n$ than \eqref{2.5}.
Independently of Evertse, Mahler \cite{Mahler1984}
took up again the study of the number of solutions of Thue equations \eqref{2.1}.
He proved that if $F(X,Y)\in\Zz [X,Y]$ is an irreducible binary form
of degree $n\geq 3$ and $|m|\geq (450n^4H(F)^4)^{n/(n-2)}$, then the Thue equation
\begin{equation*}\tag{\ref{2.1}}
F(p,q)=m
\end{equation*}
has at most $64n^{\omega (m)+1}$ solutions $p,q\in\Zz$ with $\gcd (p,m)=\gcd (q,m)=1$. Some years later, Bombieri and Schmidt \cite{BombieriSchmidt1986}
proved, without any condition on $m$, 
that \eqref{2.1} has at most $c_0n^{\omega (m)+1}$
solutions $p,q\in\Zz$ with $\gcd (p,q)=1$, where $c_0$ is an absolute constant. Let $g$ be a divisor of $m$, coprime with the discriminant of $F$, with $g \geq |m|^{\frac{2.5}{n}}$. Stewart \cite{ST1} showed in 1991 that \eqref{2.1} has at most $4200n^{\omega (g)+1}$
solutions in pairs of coprime integers $(p,q)$. Erd\H{o}s and Mahler \cite{ErdosMahler1938} were the first to estimate the number of solutions of \eqref{2.1} in terms of a divisor $g$ of $m$.
Bombieri improved Evertse's bound for the number of solutions of the Thue-Mahler
equation \eqref{2.3} in terms of the degree $n$ of $F$, and this was subsequently 
improved by Evertse by a different method. 
We mention only Evertse's result \cite{Evertse1997},
which asserts that if $F(X,Y)$ is an irreducible
binary form of degree $n\geq 3$, then \eqref{2.3} has at most
\[
2\times (10^5n)^{t+1}
\]
solutions. In fact, he proved a generalization
of this for Thue-Mahler equations over number fields. Evertse's result implies
that \eqref{2.1} has at most $2\times (10^5n)^{\omega (m)+1}$ solutions $p,q\in\Zz$,
without the requirement $\gcd (p,q)=1$.

Instead of \eqref{2.4} one may consider the weighted $S$-unit equation
\begin{equation}\label{2.6a}
ax+by=1\ \ \mbox{in $S$-units } x,y,
\end{equation}
where again $S=\{ P_1\kdots P_t\}$ 
with $P_1\kdots P_t$ distinct primes, and where $a,b$ are non-zero rationals.
Evertse \cite{Evertse1984} obtained a uniform upper bound for the number of solutions, independent of $a,b$ and the primes in $S$, i.e., $7^{2t+4}$.
Again, Evertse proved a more general result over number fields.
A substantial generalization was obtained
by Beukers and Schlickewei \cite{BeukersSchlickewei1996}. From their result
it follows that if $a,b$ are any two complex numbers
and $\Gamma$ is any multiplicative subgroup
of $\Cc^*$ of finite rank $r$ (i.e., $r$ is the maximal number of multiplicatively independent elements 
that can be chosen from $\Gamma$),
then the equation
\[
ax+by=1\ \ \mbox{in } x,y\in\Gamma
\]
has at most $2^{16(r+1)}$ solutions.

Let $F(X,Y)\in\Zz [X,Y]$ be a binary form of degree $n\geq 3$
which is irreducible over $\Qq$,
$Z\geq 1$ a real,
and $P_1\kdots P_t$ distinct primes.
Put $S:=\{\infty , P_1\kdots P_t\}$.
Denote by $A_F(Z)$ the number of solutions of the inequality
\[
|F(p,q)|\leq Z\ \ \mbox{in } (p,q)\in\Zz^2.
\]
More generally, denote by $A_{F,S}(Z)$ the number of solutions of
\begin{equation}\label{2.6}
\prod_{P\in S} |F(p,q)|_P\leq Z\ \ \mbox{in $(p,q)\in\Zz^2$ with  
$\gcd (p,q,P_1\cdots P_t)=1$.}
\end{equation}
For instance, taking $Z=1$ we get the number of pairs $(p,q)$
such that $|F(p,q)|$ is composed of primes from $\{ P_1\kdots P_t\}$
and $\gcd (p,q,P_1\kdots P_t)$ $=1$.

By a minor modification of the proof of Theorem \ref{thm2.1} one
can show that $A_{F,S}(Z)$ is finite for every $Z$.
Based on unpublished work of Siegel on $A_F(Z)$,
Mahler \cite[pp. 93--94]{Mahler1933c} derived an asymptotic formula
for $A_{F,S}(Z)$. 
His result is as follows.

\begin{thm}\label{thm2.5}
There is $\sigma_{F,S}>0$ such that
\begin{eqnarray*}
&&A_{F,S}(Z)=\sigma_{F,S}Z^{2/n}\, +\, O(Z^{1/n})\ \mbox{as } Z\to\infty
\ \ \mbox{if $t_0=0$},
\\
&&A_{F,S}(Z)=\sigma_{F,S}Z^{2/n}\, +\, O(Z^{1/(n-1)}(\log Z)^{t_0-1})\
\mbox{as } k\to\infty\ \ 
\mbox{if $t_0>0$},
\end{eqnarray*}
where $t_0$ is the number of $P\in S$ such that $F(X,1)$
has a zero in $\Qq_P$, and where the implied constants depend on $F$ and $S$.
\end{thm}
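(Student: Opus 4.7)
The plan is to decompose $A_{F,S}(Z)$ according to the $P$-adic factorizations of $F(p,q)$ at the finite primes of $S$, apply Siegel's archimedean asymptotic within each piece, and sum. For a tuple $\vec{a}=(a_1,\ldots,a_t)$ of non-negative integers set $M_{\vec{a}}:=\prod_i P_i^{a_i}$. A coprime-to-$S$ pair $(p,q)$ is counted in $A_{F,S}(Z)$ precisely when $|F(p,q)|\leq Z\cdot M_{\vec{a}}$ with $\vec{a}=(v_{P_i}(F(p,q)))_i$, so
$$A_{F,S}(Z)=\sum_{\vec{a}}N_{\vec{a}}(Z),$$
where $N_{\vec{a}}(Z)$ counts coprime-to-$S$ pairs with $v_{P_i}(F(p,q))=a_i$ for all $i$ and $|F(p,q)|\leq ZM_{\vec{a}}$.

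The key input is Siegel's archimedean asymptotic in uniform form: for any residue class modulo a positive integer $M$, the number of lattice pairs $(p,q)$ in that class with $|F(p,q)|\leq X$ equals $\sigma_F X^{2/n}/M^2 + O(X^{1/n}/M)$, where $\sigma_F$ denotes the area of $\{(x,y)\in\Rr^2:|F(x,y)|\leq 1\}$ and the implied constant depends only on $F$. The conditions $v_{P_i}(F(p,q))=a_i$ together with coprimality cut out a union of residue classes modulo $M':=\prod_i P_i^{a_i+1}$; write $\delta_{\vec{a}}$ for the proportion of such classes. By the Chinese Remainder Theorem $\delta_{\vec{a}}=\prod_i \delta_{i,a_i}$, and a short Hensel-type computation shows $\delta_{i,a}$ vanishes for $a$ sufficiently large when $F(X,1)$ has no zero in $\Qq_{P_i}$, and behaves like $c_iP_i^{-a}$ with $c_i>0$ otherwise. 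In either case $M'\delta_{\vec{a}}=O(1)$ uniformly in $\vec{a}$, so Siegel's estimate in each admissible class sums to
$$N_{\vec{a}}(Z)=\sigma_F\delta_{\vec{a}}(ZM_{\vec{a}})^{2/n}+O\bigl((ZM_{\vec{a}})^{1/n}\bigr).$$

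Summing the main terms over all $\vec{a}$, each local factor $\sum_{a\geq 0}\delta_{i,a}P_i^{2a/n}$ converges absolutely because $2/n<1$, and the product gives the claimed $\sigma_{F,S}Z^{2/n}$ with $\sigma_{F,S}>0$. For the error we truncate at a parameter $Y$. The number of admissible tuples $\vec{a}$ with $M_{\vec{a}}\leq Y$ is $O((\log Y)^{t_0})$, and partial summation gives $\sum_{M_{\vec{a}}\leq Y}M_{\vec{a}}^{1/n}\ll Y^{1/n}(\log Y)^{t_0-1}$, so the truncated error contributes $O(Z^{1/n}Y^{1/n}(\log Y)^{t_0-1})$. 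Using the decay $\delta_{\vec{a}}\ll M_{\vec{a}}^{-1}$ the main-term tail is bounded by $O(Z^{2/n}Y^{2/n-1}(\log Y)^{t_0-1})$. Choosing $Y=Z^{1/(n-1)}$ balances the two and yields the stated error $O(Z^{1/(n-1)}(\log Z)^{t_0-1})$ when $t_0\geq 1$. If $t_0=0$ then every $v_{P_i}(F(p,q))$ is uniformly bounded, the $\vec{a}$-sum consists of finitely many terms, no truncation is needed, and Siegel's error $O(Z^{1/n})$ propagates directly.

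The main technical obstacle is the uniform version of Siegel's archimedean asymptotic over arbitrary residue classes modulo $M$ with error term $O(X^{1/n}/M)$ explicit in $M$; combined with the precise Hensel-based evaluation of the local densities $\delta_{i,a}$, this is what permits the contributions of all $\vec{a}$ to be assembled and balanced into the stated asymptotic.
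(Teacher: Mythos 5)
Your overall architecture --- decomposing by the exact $P$-adic valuations $\vec a$ of $F(p,q)$, reducing each piece to an archimedean count over residue classes, and reassembling with a truncation at $Y=Z^{1/(n-1)}$ --- is a genuinely different organization from Mahler's, which instead splits the solutions by the size of $|p,q|$ into large ($|p,q|\ge (4Z)^{1/(n-2)}$), medium and small ranges, treating the large ones by the Diophantine approximation results of Section \ref{section1}, the medium ones by congruences and continued fractions, and only the small ones by a lattice-point-versus-area comparison. The difficulty is that your entire argument rests on the asserted ``uniform Siegel asymptotic'': that the number of $(p,q)$ in a fixed residue class mod $M$ with $|F(p,q)|\le X$ is $\sigma_FX^{2/n}/M^2+O(X^{1/n}/M)$. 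This statement is false as it stands, for two reasons. First, when $F(X,1)$ has a real root the region $|F(x,y)|\le X$ is unbounded, with long tentacles along the real root directions; even for $M=1$ the correct error term is $O(X^{1/(n-1)})$, not $O(X^{1/n})$ (the paper records exactly this dichotomy for $S=\{\infty\}$ right after the statement of Theorem \ref{thm2.5}). Second, an error term that tends to $0$ as $M\to\infty$ is impossible for a single residue class: a class can contain one lattice point while the main term $X^{2/n}/M^2$ is $\ll 1$, so at best one has an additional $+O(1)$ per class. Controlling the lattice points far out in the tentacles is precisely what cannot be obtained from area-versus-boundary estimates and is where Mahler must invoke the Thue--Mahler machinery of Sections \ref{section1}--\ref{section2} for the large range and a continued-fraction argument for the medium range; your proposal never appeals to any of this, which is a sign that the hard part has been absorbed into an unproved lemma.

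A second, related gap: to bound the tail $\sum_{M_{\vec a}>Y}N_{\vec a}(Z)$ you bound only the sum of the corresponding \emph{main terms} using $\delta_{\vec a}\ll M_{\vec a}^{-1}$. What is needed is an upper bound for the actual number of coprime pairs with $\prod_{P\in S}|F(p,q)|_P\le Z$ and $\prod_iP_i^{v_{P_i}(F(p,q))}>Y$; such pairs are $P$-adically very close to roots of $F(X,1)$, and bounding their number is again a Diophantine approximation statement, not a consequence of the decay of the local densities. The combinatorial skeleton of your computation (the Hensel evaluation $\delta_{i,a}\asymp P_i^{-a}$, the convergence of $\sum_a\delta_{i,a}P_i^{2a/n}$ because $2/n<1$, the count $O((\log Y)^{t_0-1})$ of tuples with $M_{\vec a}$ in a dyadic range, and the balancing choice $Y=Z^{1/(n-1)}$) is sound and would be a reasonable way to organize a modern proof, but as written the proposal defers the two essential analytic inputs to assertions that are respectively false and unjustified.
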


\noindent
We mention that with Mahler's proof
the implied constants cannot be computed effectively.

In the special case $S=\{\infty\}$, i.e., if there are no primes,
Theorem \ref{thm2.5} 
asserts that there is $\sigma_F>0$ such that $A_F(Z)=\sigma_FZ^{2/n}+O(Z^{1/(n-1)})$
if $F(X,1)$ has a real root, and
$A_F(Z)=\sigma_FZ^{2/n}+O(Z^{1/n})$ if $F(X,1)$ has no real root.
Here, $\sigma_F$ is the area of the set of $(x,y)\in\Rr^2$
with $|F(x,y)|\leq 1$ and $\sigma_FZ^{2/n}$ is the area of
$(x,y)\in\Rr^2$ with $|F(x,y)|\leq Z$.

For arbitrary $S$,
Mahler expressed $\sigma_{F,S}$ as a product of local factors
$\prod_{P\in S} \sigma_{F,P}$.
Another formulation of $\sigma_{F,S}$ is as follows.
Let $\mu^r=\mu_{\infty}^r$ be the Lebesgue measure
on $\Rr^r$, normalized such that $\mu_{\infty}^r([0,1]^r)=1$,
for a prime $P$ let $\mu_P^r$ be the Haar measure on $\Qq_P^r$
normalized such that $\mu_P^r(\Zz_P^r)=1$ and subsequently
let $\mu_S^r$ be the product measure on 
$\Aa_S^r:=\prod_{P\in S}\Qq_P^r$. Write elements of this product as
$(\xv_P)_{P\in S}$ where $\xv_P\in\Qq_P^r$ and for $\xv_P=(x_1\kdots x_r)\in\Qq_P^r$, write $|\xv_P|_P:=\max_i|x_i|_P$. Define the sets
\begin{eqnarray*}
&&\SS_{F,S}(Z):=\Big\{ (\xv_P)_{P\in S}\in\Aa_S^2:\,\prod_{P\in S}|F(\xv_P)|_P\leq Z,
\\[-0.2cm]
&&\hspace*{5.5cm}
|\xv_P|_P=1\ \mbox{for } P\in S\setminus\{\infty\}\Big\}
\end{eqnarray*}
and $\SS_{F,S}:=\SS_{F,S}(1)$.
Then 
$\sigma_{F,S}=\mu_S^2(\SS_{F,S})$ and $\sigma_{F,S}Z^{2/n}=
\mu_S^2(\SS_{F,S}(Z))$ for $Z>0$. If we identify $\xv\in\Zz^2$ with 
$(\xv)_{P\in S}\in\Aa_S^2$, then $A_{F,S}(Z)$ counts the number of
lattice points in $\SS_{F,S}(Z)$, and Theorem \ref{thm2.5} 
states in a more precise form that this number is approximately
the measure of $\SS_{F,S}(Z)$.

\begin{proof}[Sketch of proof of Theorem \ref{thm2.5}]
We give a very brief outline of Mahler's lengthy proof.
Mahler divides the solutions $(p,q)$ of
\eqref{2.6}
into \emph{large} solutions,
i.e., with $|p,q|\geq (4Z)^{1/(n-2)}$, \emph{medium} solutions,
i.e., with $Z^{1/(n-1)}\leq |p,q|<(4Z)^{1/(n-2)}$,
and \emph{small} solutions, i.e., with $|p,q|<Z^{1/(n-1)}$.
Mahler estimates the number of large numbers 
using the approximation techniques discussed in the previous section,
and the number of medium solutions
by means of an elementary argument
based on congruences and continued fractions.
As it turns out, both the numbers of 
large and medium solutions go into the error term.
Then Mahler estimates the number of small solutions by 
combining congruence results for binary forms
with elementary estimates for the difference between 
the number of lattice points
in a bounded two-dimensional region and the area of that region.
\end{proof}

Since we know that for the number of solutions of the Thue-Mahler
equation we have an upper bound
independent of the coefficients of the involved binary form $F$,
one may wonder whether $A_{F,S}(Z)$ can be estimated from above
independently of the coefficients of $F$. More precisely,
one may ask whether $\sigma_{F,S}$ and the implied constant of the
error term can be estimated from above independently of $F$.

Some progress has been made on these problems.
Bean \cite{Bean1994} showed that if $F(X,Y)\in\Rr [X,Y]$ is a binary form
of degree $n\geq 3$ with discriminant $D(F)\not= 0$,
then $\sigma_F\leq 16|D(F)|^{-1/n(n-1)}$.
Thunder \cite{Thunder1998} proved that if $F(X,Y)\in\Zz [X,Y]$
is a cubic binary form which is irreducible over $\Qq$, then
\[
|A_F(Z)-Z^{2/3}\sigma_F|< 9
+\,\frac{2000k^{1/3}}{|D(F)|^{1/2}}\,+ 3156Z^{1/3}.
\]
Later, Thunder proved some general results for inequalities 
involving decomposable forms. Specialized to binary forms,
these results imply that if $F(X,Y)\in\Zz [X,Y]$ is a binary form
of degree $n$ with non-zero discriminant, having no linear
factor over $\Qq$, then for all $Z\geq 1$ we have
\begin{eqnarray*}
&&A_F(Z)\leq c_1(n)Z^{2/n}\ \ \mbox{if } n\geq 3 \ \mbox{\cite{Thunder2001}},
\\
&&|A_F(Z)-\sigma_FZ^{2/n}|\leq c_2(n)Z^{2/(n+1)}\ \ 
\mbox{if $n\geq 3$, $n$ odd}\ \mbox{\cite{Thunder2005}},
\end{eqnarray*}
where $c_1(n)$, $c_2(n)$ are effectively computable and depend on $n$ only. 
An estimate
with error term depending only on $n$ and $Z$ has not been deduced yet
for even $n$. 

In his PhD-thesis \cite{Liu2015}, Liu generalized the work
of Thunder on decomposable forms
to the $p$-adic case. We mention some consequences for binary forms.
Let $P_1\kdots P_t$ be distinct primes, $S=\{\infty ,P_1\kdots P_t\}$
and $F(X,Y)\in\Zz [X,Y]$ a binary form of degree $n$
with non-zero discriminant and without linear factor over $\Qq$.
Then 
\begin{eqnarray*}
&&\sigma_{F,S}\leq c_1(n,S),\ \ A_{F,S}(Z)\leq c_2(n,S)Z^{2/n}\ \ 
\mbox{if } n\geq 3,
\\
&&
|A_{F,S}-\sigma_{F,S}Z^{2/n}|\leq c_3(n,S) Z^{2/(n+1)}(1+\log Z)^{2n(t+1)}\ \ 
\mbox{if $n\geq 3$, $n$ odd,}
\end{eqnarray*}
where the $c_i(n,S)$ are
effectively computable and depend only on $n$ and $S$.
The constants $c_i(n,S)$ that arise from Liu's proof
depend on the sizes of the primes in $S$.
Another open problem is, whether the $c_i(n,S)$ can be replaced
by numbers depending only on $n$ and the cardinality of $S$.
 
 Let $F$ be a binary form with integer coefficients, non-zero discriminant  and degree $n$ with $n \geq 3$. For any positive number $Z$ let $\RR_F(Z)$ denote the set of non-zero integers $h$ with $|h| \leq Z$ for which there exist integers $p$ and $p$ with $F(p,q) = h$. 
Denote the cardinality of a set  $\AA$ by $|\AA|$ and put 
$R_F(Z) := |\RR_F(Z)|$. 
In 1938 Erd\H{o}s and Mahler \cite{ErdosMahler1938} proved that if $F$ is irreducible over ${\mathbb{Q}}$  then there exist positive numbers $c_1$ and $c_2$, which depend on $F$, such that 
\[
R_F(Z) > c_1 Z^{\frac{2}{n}}
\]
for $Z > c_2$. \
 In 1967 Hooley \cite{Hoo1} determined the asymptotic growth rate of $R_F(Z)$ when $F$ is an irreducible binary cubic form with discriminant which is not a square. He proved that
\begin{equation} \label{Hooley result 1} 
R_F(Z) = \sigma_F Z^{\frac{2}{3}} + O \left(Z^{\frac{2}{3}} (\log \log Z)^{-\frac{1}{600}}\right). 
\end{equation}
In 2000 Hooley \cite{Hoo2} obtained an asymptotic estimate for $R_F(Z)$ in the case when the discriminant is a perfect square.   Hooley \cite{Hoo1-2} also obtained such an estimate  for quartic forms of the shape    
\[F(X,Y) = aX^4 + 2bX^2 Y^2 + cY^4\]  
and for forms which are the product of linear factors over the rationals \cite{Hooley}. Several authors, including Bennett, Dummigan, and Wooley \cite{BDW}, Browning \cite{Br2}, Greaves \cite{Grea}, Heath-Brown \cite{HB3}, Hooley \cite{Hool1},  Skinner and Wooley \cite{SWoo} and Wooley \cite{Woo}, obtained an asymptotic estimate for $R_F(Z)$ when $F$ is a binomial form. Stewart and Xiao \cite{SX} have recently proved that if $F$ is a binary form with integer coefficients, non-zero discriminant  and degree $n$ with $n \geq 3$ then there is a positive number $C_F$ such that $R_F(Z)$ is asymptotic to $C_F Z^{\frac{2}{n}}$. In the case that the form $F$ is irreducible over the rationals a key ingredient in the proof is Theorem \ref{thm2.5}. When $F$ is reducible one appeals to a special case of a result of Thunder \cite{Thunder2001}.

Let $k$ be an integer with $k \geq 2$. An integer is said to be $k$-free if it is not divisible by the $k$-th power of a prime number. For any positive number $Z$ let $\RR_{F,k}(Z)$ denote the set of $k$-free integers $h$ with $|h| \leq Z$ for which there exist integers $p$ and $q$ such that $F(p,q) = h$ and put 
$R_{F,k}(Z) := |\RR_{F,k}(Z)|$. Gouv\^{e}a and Mazur \cite{GM} in 1991 proved that if there is no prime $P$ such that $P^2$ divides $F(a,b)$ for all pairs of integers $(a,b)$, if all the irreducible factors of $F$ over $\Qq$ have degree at most $3$ and if $\epsilon$ is a positive real number then there are positive numbers $C_1$ and $C_2$, which depend on $\epsilon$ and $F$, such that if $Z$ exceeds $C_1$ then
\begin{equation} \label{basic inequality} R_{F,2}(Z) > C_2 Z^{\frac{2}{n}- \epsilon}.\end{equation}
This was subsequently extended by Stewart and Top in \cite{ST2}. Let $r$ be the largest degree of an irreducible factor of $F$ over $\Qq$. Let $k$ be an integer with $k \geq 2$ and suppose that there is no prime $P$ such that $P^k$ divides $F(a,b)$ for all integer pairs $(a,b)$. They showed, by utilizing an argument of Greaves \cite{Gre} and the result of Erd\H{o}s and Mahler \cite{ErdosMahler1938},
that if $k$ is at least $(r-1)/2$ or $k = 2$ and $r = 6$ then there are positive numbers $C_3$ and $C_4$, which depend on $k$ and $F$, such that if $Z$ exceeds $C_3$ then
\begin{equation} \label{basic inequality 2} R_{F,k}(Z) > C_4 Z^{\frac{2}{n}}. \end{equation}
The estimates of Gouv\^{e}a and Mazur and of Stewart and Top were used to count the number of twists of an elliptic curve defined over the rationals for which the rank of the group of rational points on the curve is at least $2$.

 Let $F$ be a binary form with integer coefficients, non-zero discriminant and degree $d$ with $d$ at least $3$ and let $r$ denote the largest degree of an irreducible factor of $F$ over the rationals. Let $k$ be an integer with $k \geq 2$ and suppose 
again that there is no prime $P$ such that $P^k$ divides $F(a,b)$ for all pairs of integers $(a,b)$. Stewart and Xiao \cite{SX1}  proved that there is a positive number $C_{F,k}$ such that   $R_{F,k}(Z)$ is asymptotic to $C_{F,k} Z^{\frac{2}{n}}$ 
provided that $k$ exceeds $\medfrac{7r}{18}$ or $(k,r)$ is $(2,6)$ or $(3,8)$. For a positive number $Z$ we put
\[
\NN_{F,k}(Z) := \{(p,q) \in \Zz^2 : F(p,q) \text{ is }k\text{-free and } 1 \leq |F(p,q)| \leq Z\}
\]
and
\[
N_{F,k}(Z) := |\NN_{F,k}(Z)|.
\]
For each positive integer $m$ we put
\[
\rho_F(m) := |\{(i,j) \in \{0, \cdots, m-1\}^2 : F(i,j) \equiv 0 \pmod{m} \}|
\]
and
\[\lambda_{F,k} := \prod_P \left(1 - \frac{\rho_F(P^k)}{P^{2k}} \right),\]
where the product is taken over the primes $P$. A first step in the proof of the estimate for $R_{F,k}(Z)$ is to estimate $N_{F,k}(Z)$ provided that $k$ exceeds $  \medfrac{7r}{18}$ or $(k,r)$ is $(2,6)$ or $(3,8)$.  Stewart and Xiao \cite{SX1} proved that under this assumption  
\begin{equation} \label{est11}N_{F,k}(Z) \sim \lambda_{F,k} \sigma_{F}Z^{\frac{2}{n}}  \end{equation}
which extends Mahler's result \cite{Mahler1933c}.

\section{Cubic Thue equations with many solutions}\label{section3}
Chowla \cite{SDC} was the first to show that there are cubic Thue equations with many solutions. He proved in 1933 that there is a positive number $c_0$ such that if $k$ is a non-zero integer then the number of solutions of $p^3-kq^3=m$ in integers $p$ and $q$ is at least $c_0\log\log m$ for infinitely many positive integers $m.$ This was refined by Mahler \cite{Mahler1935} in 1935. Let $F(X,Y)$ be a cubic binary form with integer coefficients and non-zero discriminant.  Let $m$ be a non-zero integer and consider the equation
\begin{equation}\label{est111}
F(p,q)=m,
\end{equation}
in integers $p$ and $q$. Mahler proved that there is a positive number $c_1$, which depends on $F$, such that for infinitely many positive integers $m$ equation \eqref{est111} has at least
\begin{equation}\label{est222}
c_1(\log m)^{1/4} 
\end{equation}
solutions. In 1983, Silverman \cite{JHS} proved that the exponent of $1/4$ in \eqref{est222} can be improved to 
$1/3$. Silverman streamlined the approach of Mahler by introducing the theory of height functions on elliptic curves into the argument. Chowla, Mahler and Silverman  obtained their results by viewing \eqref{est111}, when it has a rational point, as defining an elliptic curve $E$ and then by constructing, from rational points on $E$, integers $m'$ for which $F(p,q)=m'$ has many solutions in integers $p$ and $q$.  The solutions 
$(p,q)$ constructed by this method have very large common factors. By showing that it is always possible to find a twist of $E$ for which the rank of the group of rational points is at least $2$ Stewart \cite{CLS} was able to make a further improvement on Mahler's result. He showed that \eqref{est222} holds with $1/4$ replaced by $1/2$. In addition Stewart \cite{CLS1}, utilizing some elliptic curves whose group of rational points has rank $12$ discovered by Quer \cite{JQ}, showed that there are infinitely many cubic binary forms with integer coefficients, content 1 and non-zero discriminant which are inequivalent under the action of $GL(2,\mathbb{Z})$ and for which the estimate  \eqref{est222} applies with $1/4$ replaced by $6/7$.

\section{$S$-integral points on elliptic curves}\label{sectionx}
In his celebrated paper \cite{Siegel1929}, Siegel proved that 
non-singular affine plane curves of genus at least $1$ over $\Qq$
have only finitely many points in $\Zz^2$. In fact, he proved a more
general result for curves defined over a number field.
Mahler \cite{Mahler1933d} proved a generalization to $S$-integers
in a special case, namely for curves of genus $1$ over $\Qq$.
We state his result. In what follows, $S=\{ P_1\kdots P_t\}$ is
a finite set primes and $\Zz_S$ is the
corresponding ring of $S$-integers, i.e., the ring of rational numbers
whose denominators do not contain any prime factor different from
$P_1\kdots P_t$.

\begin{thm}\label{thmx}
Let $f\in\Qq [X,Y]$ be such that $f(x,y)=0$ 
defines a non-singular affine algebraic curve of genus $1$.
Then there are only finitely many $x,y\in\Zz_S$ with $f(x,y)=0$.
\end{thm}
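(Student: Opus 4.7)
The plan is to reduce finiteness of $S$-integral points on the genus-$1$ curve to finiteness of solutions of an $S$-unit equation in a suitable number field. If the curve has no $\Qq$-point then there are certainly no $\Zz_S$-points, so I may assume a $\Qq$-rational point exists. Taking such a point as the origin and applying a $\Qq$-birational transformation, I place the curve in short Weierstrass form $y^2 = (x-e_1)(x-e_2)(x-e_3)$ with distinct $e_i$ lying in the splitting field $K/\Qq$ of the cubic. The transformation introduces denominators at only finitely many primes, so finiteness of $\Zz_S$-points on the original curve reduces to finiteness of solutions $(x,y)\in\Zz_{S_1}^2$ on the Weierstrass model for some finite $S_1\supset S$.

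Next I enlarge $S_1$ to a finite set $S_2$ of places of $K$ so that $\OO_{K,S_2}$ is a principal ideal domain and each difference $e_i-e_j$ is an $S_2$-unit. For any solution $(x,y)$ and any prime $\fp\notin S_2$, the factors $x-e_i$ and $x-e_j$ cannot share $\fp$, as otherwise $\fp$ would divide their difference $e_j-e_i$; consequently $v_\fp(x-e_i)$ is even, in view of $\prod_i(x-e_i)=y^2$. Thus $(x-e_i)=\fa_i^2$ as $S_2$-ideals, and each $\fa_i$ is principal, yielding
\[
x-e_i = c_i z_i^2,\qquad z_i\in\OO_{K,S_2},
\]
with $c_i$ ranging over a fixed finite set of representatives of $\OO_{K,S_2}^{*}/(\OO_{K,S_2}^{*})^2$, finite by Dirichlet's $S$-unit theorem.

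Now pass to the finite extension $L=K(\sqrt{c_1},\sqrt{c_2},\sqrt{c_3})$ and set $\alpha_i:=\sqrt{c_i}\,z_i$, so that $\alpha_i^2=x-e_i$. The factorization
\[
(\alpha_i-\alpha_j)(\alpha_i+\alpha_j)=\alpha_i^2-\alpha_j^2=e_j-e_i
\]
shows, after enlarging to some $S_3\supset S_2$ in $\OO_L$ for which every $e_j-e_i$ is an $S_3$-unit, that each $\alpha_i\pm\alpha_j$ is itself an $S_3$-unit. The linear identity $(\alpha_1-\alpha_2)+(\alpha_2-\alpha_3)=\alpha_1-\alpha_3$ then yields the $S_3$-unit equation
\[
\frac{\alpha_1-\alpha_2}{\alpha_1-\alpha_3}+\frac{\alpha_2-\alpha_3}{\alpha_1-\alpha_3}=1
\]
over the number field $L$. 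By the extension of Mahler's finiteness theorem (Corollary \ref{cor2.4}) to number fields, only finitely many such pairs of $S_3$-units exist. Combining each such pair with the fixed norm relations $(\alpha_i-\alpha_j)(\alpha_i+\alpha_j)=e_j-e_i$ pins down every $\alpha_i$ up to finitely many values, and hence $x=\alpha_1^2+e_1$ together with $y$; pulling back to the original curve completes the argument.

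The chief technical obstacle is not any individual deep step but the careful bookkeeping of the tower $S\subset S_1\subset S_2\subset S_3$ of finite sets of primes across the field extensions $\Qq\subset K\subset L$, together with the appeal to the finiteness of the square-class group $\OO_{K,S_2}^{*}/(\OO_{K,S_2}^{*})^2$ that compactly packages the $c_i$ into a finite set. Once these arithmetic ingredients are assembled, the reduction to the $S$-unit equation and the invocation of the number-field version of Corollary \ref{cor2.4} are essentially mechanical.
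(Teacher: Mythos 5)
Your unit-equation argument for the Weierstrass model $y^2=(x-e_1)(x-e_2)(x-e_3)$ is a correct and standard route, and it is genuinely different from the paper's: Mahler instead proves the birationally invariant statement that for a non-constant $g\in\Qq(E)$ the set of $\mathbf{p}\in E(\Qq)$ with $g(\mathbf{p})\in\Zz_S$ is finite, using weak Mordell--Weil to replace $g$ by $g_n(\mathbf{p})=g(\mathbf{p}_0+n\mathbf{p})$ (large degree, bounded pole orders) and then contradicting Theorem~\ref{thm1.1}. Your approach trades that Diophantine-approximation machinery for the $S$-unit equation over a number field, which is legitimate provided one grants the number-field version of Corollary~\ref{cor2.4} (this is not Mahler's Corollary~\ref{cor2.4}, which is only over $\Qq$, but the later extensions in the spirit of Parry and Lang); the ideal-theoretic bookkeeping and the recovery of the $\alpha_i$ from Siegel's identity are fine.

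The genuine gap is in your very first reduction. A $\Qq$-birational map $\phi$ from the given affine curve $C:\,f=0$ to a Weierstrass model does \emph{not} carry $\Zz_S$-points to $\Zz_{S_1}$-points for any fixed finite $S_1$. The coordinate functions $X,Y$ of the Weierstrass model pull back to rational functions on $C$ whose poles sit at $\phi^{-1}(O)$; if that preimage is an affine point of $C$ --- which is forced whenever $C$ has no \emph{rational} point at infinity, e.g.\ $C:\,y^2=2x^4-1$ with the rational point $(1,1)$, whose two points at infinity are conjugate over $\Qq(\sqrt{2})$ --- then $S$-integral points of $C$ lying $P$-adically close to $\phi^{-1}(O)$ for varying primes $P\notin S$ have images with unbounded denominators supported at varying primes. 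So the sentence ``the transformation introduces denominators at only finitely many primes'' is false in general, and it is exactly where the difficulty of the general genus-one case is hidden; this is why the paper works with the birationally invariant formulation and the multiplication-by-$n$ covering trick. To repair the argument you must either justify that one may choose a model whose origin lies at infinity on $C$ (so that $X\circ\phi$ and $Y\circ\phi$ are regular on the non-singular affine curve $C$, hence lie in $\Qq[x,y]/(f)$, and the reduction is legitimate --- but such a rational point at infinity need not exist), or add a further device such as the unramified covering $[n]:E\to E$ together with a Chevalley--Weil type argument, or prove the height-theoretic form of Siegel's theorem for an arbitrary non-constant function.
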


In his proof, Mahler closely follows Siegel and uses his own Theorem \ref{thm1.1} instead of Siegel's approximation theorem.

\begin{proof}[Sketch of proof]
Mahler proves in fact an essentially equivalent result, invariant under birational transformations over $\Qq$, which in more modern form may be stated as follows:
\\[0.2cm] 
\emph{let $E$ be a non-singular projective curve of genus $1$ over $\Qq$ and
$g\in\Qq (E)$ a non-constant rational function on $E$ over $\Qq$;
then the set of ${\bf p}\in E(\Qq )$ with $g({\bf p})\in\Zz_S$ is finite.} 
\\[0.2cm]
Assume on the contrary that this set is infinite.
After a birational transformation over $\Qq$ we may assume that $E$ is an elliptic
curve with Weierstrass equation
$y^2=x^3+Ax+B$ with $A,B\in\Qq$ and point $O$ at infinity.
Fix an integer $n$ which is later chosen to be sufficiently large. 
By the weak Mordell-Weil theorem, 
the quotient group $E(\Qq )/nE(\Qq )$ is finite. Hence there is 
${\bf p}_0\in E(\Qq )$ such that the set $A_n$ of ${\bf p}\in E(\Qq )$
with $g({\bf p}_0+n{\bf p})\in\Zz_S$ is infinite.
Suppose that $g$ has degree $r$ and let $h$ be the maximum of the orders
of its poles.
Then $g_n:\, {\bf p}\mapsto g({\bf p}_0+n{\bf p})$ is a rational function on $E$
defined over $\Qq$ of degree $n^2r$, 
whose poles still have order at most $h$.
Let $\varphi :\, (x,y)\mapsto\medfrac{ax+b}{cx+d}$ with $a,b,c,d\in\Qq$
with $ad-bc\not=0$ be such that $\varphi ({\bf q})\not=\infty$
for every pole ${\bf q}$ of $g$ on $E$.
By a straightforward generalization of Siegel's arguments,
Mahler shows that there are $P\in S':=S\cup\{\infty\}$,
a pole ${\bf q}\in E(\overline{\Qq_P})$ of $g_n$ and infinitely many 
${\bf p}\in A_n$  
such that
\[
\left|\varphi ({\bf q})-\varphi ({\bf p})\right|_P
\leq H(\varphi ({\bf p}))^{-n^2r/6|S'|h},
\]
where $H(p/q):=\max (|p|,|q|)$ for $p,q\in\Zz$ with $q>0$ and $\gcd (p,q)=1$.
The number $\varphi ({\bf q})$ is algebraic of degree at most $n^2r$ since 
${\bf q}$ is a pole of $g_n$ and $g_n$ has degree at most $n^2 r$.
If we choose $n$ large enough so that $n^2r/6|S'|h> 2\sqrt{n^2r}=2n\sqrt{r}$,
we obtain a contradiction with Theorem \ref{thm1.1}. 
\end{proof}

Siegel treated the case of curves of genus larger than $1$ by embedding
such a curve in its Jacobian, applying the Mordell-Weil theorem to this
Jacobian, and using a simultaneous Diophantine approximation argument.
After Roth's approximation theorem and height theory became available, 
Siegel's method of proof could be greatly simplified. Lang \cite{Lang1960} worked this
out and obtained a version of Siegel's theorem for curves
of arbitrary genus $g\geq 1$ over a number field $K$,
implying that for every finite set of places $S$ of $K$,
such a curve has only finitely $S$-integral points. For curves over $K$
of genus $>1$ this was of course
superseded by Faltings \cite{Faltings1983}, who proved that such curves
have only finitely many $K$-rational points. 
The booklet \cite{Fuchs-Zannier2014}
contains a translation into English by Fuchs of Siegel's paper \cite{Siegel1929},
and a paper by Fuchs and Zannier giving an overview of several methods of proof
of Siegel's theorem, including one by Corvaja and Zannier, which uses
Schmidt's Subspace Theorem instead of the Mordell-Weil theorem. 

\section{Effective finiteness results for Thue equations, Thue--Mahler equations and unit equations}\label{section4}

Consider again the Thue equation
\begin{align}\label{eq41}
F(p,q)=m\quad\textrm{in integers $p,q$}\tag{2.1}
\end{align}
and the Thue--Mahler equation
\begin{align}\label{eq42}
|F(p,q)|=P_1^{z_1}\cdots P_t^{z_t}\quad&\textrm{in integers $p,q,z_1,\ldots,z_t$}\tag{2.3}\\&\textrm{with $\gcd (p,q)=1$},\notag
\end{align}
where $F(X,Y)$ is an irreducible binary form of degree $\ge 3$ with coefficients in $\mathbb{Z},m$ a non-zero integer and $P_1,\ldots,P_t$ a set of $t\ge 0$ distinct primes. The proofs of the finiteness theorems of Thue \cite{Thue1909} and Mahler \cite{Mahler1933a,Mahler1933b} concerning equations \eqref{eq41} and \eqref{eq42}, respectively, were ineffective, i.e. did not provide any algorithm for determining the solutions of these equations.

The first effective proof for Thue theorem was given by Baker \cite{bak4} and subsequently, for Mahler's theorem, by Coates \cite{coa1,coa2}. They obtained explicit upper bounds for $|p,q|$, the maximum of the absolute values of the solutions $p,q$.

Baker's proof is based on his effective lower bounds for linear forms in the logarithms of algebraic numbers. Gel'fond \cite{gel1} and Schneider \cite{schn} proved independently of each other that if $\alpha$ and $\beta$ are algebraic numbers such that $\alpha\ne 0,1$ and $\beta$ is not rational, then $\alpha^\beta$ is transcendental. An equivalent formulation of this theorem is that if $\alpha_1,\alpha_2$ are non-zero algebraic numbers such that $\log \alpha_1$ and $\log\alpha_2$ are linearly independent over $\mathbb{Q}$, then they are linearly independent over $\overline{\mathbb{Q}}$. Further, Gel'fond \cite{gel3} gave a non-trivial effective lower bound for $|\beta_1\log\alpha_1+\beta_2\log\alpha_2|$, where $\beta_1,\beta_2$ denote algebraic numbers, not both $0$, and $\alpha_1,\alpha_2$ denote algebraic numbers different from $0$ and $1$ such that $\log\alpha_2/\log\alpha_2$ is not rational. Baker \cite{bak1,bak2,bak3} generalized the Gel'fond--Schneider theorem to arbitrary many logarithms and provided non-trivial effective lower bounds for $|\beta_1\log\alpha_1+\cdots +\beta_k\log\alpha_k|$, where $\alpha_1,\ldots,\alpha_k$ are non-zero algebraic numbers such that $\log\alpha_1,\ldots,\log\alpha_k$ are linearly independent over $\mathbb{Q}$ and $\beta_1,\ldots,\beta_k$ are algebraic numbers, not all zero.

Baker's effective estimates for logarithmic forms led to significant applications in Diophantine equations and other parts of number theory. Baker \cite{bak4} showed that $|p,q|\le C$ for every solution $p,q$ of \eqref{eq41}, with an explicitly given $C$ depending only on $m$ and the degree and height of $F$.

Mahler \cite{mah} proved a $P$-adic analogue of the Gel'fond--Schneider theorem. Gel'fond \cite{gel3} gave an effective estimate for linear forms in two $P$-adic logarithms which was generalized by Coates \cite{coa1} to arbitrarily many $P$-adic logarithms. Using his estimate, Coates \cite{coa2} proved that $|p,q|\le C'$ for every solution $p,q,z_1,\ldots,z_t$ of \eqref{eq42}, with an explicit bound $C'$ depending only on $t$, the maximum of the primes $P_1,\ldots,P_t$ and the degree and height of $F$.

Baker's and Coates' bounds for linear forms in logarithms and for the solutions of \eqref{eq41} and \eqref{eq42} were later improved by many people; for references see e.g. Baker and W\"ustholz \cite{baw} and Bugeaud \cite{bug}. The best known bounds for \eqref{eq41} and \eqref{eq42}, proved over number fields, are due to Bugeaud and Gy\H ory \cite{bugy} and Gy\H ory and Yu \cite{gyoy}.
In \cite{bugy} it is proved that all solutions $p,q$ of \eqref{eq41} satisfy
$$|p,q|<\exp\{c(n)H^{2n-2}(\log H)^{2n-1}\log M\},$$
where $c(n)=3^{3(n+9)}n^{18(n+1)}$, and $M,H(\ge 3)$ are upper bounds for $|m|$ and for the maximum absolute value of the coefficients of $F$, respectively. A similar upper bound is given in \cite{bugy} for the solutions of \eqref{eq42}, but that bound depends also on $t$ and the maximum of $P_1,\ldots,P_t$. In terms of $c(n)$ and $H$ better bounds are obtained in \cite{gyoy} which depend, however, on some parameters of the splitting field of $F$. We note that the exponential dependence is a consequence of the exponential character of the bounds for linear forms in logarithms. 

Effective generalizations to equations over finitely generated domains can be found in B\'erczes, Evertse and Gy\H ory \cite{begy}.

Mahler's finiteness result \cite{Mahler1933a} concerning equation \eqref{eq42} implies that the greatest prime factor $P(F(p,q))$ of $F(p,q)$ at integral points $(p,q)$ tend to infinity as $|p,q|\rightarrow\infty$. Coates \cite{coa2} deduced from his effective theorem mentioned above the first general effective lower bound for $P(F(p,q))$ by showing that
\begin{align}\label{eq411}
P(F(p,q))\gg(\log_2|p,q|)^{1/4}.
\end{align}
Here and below $\log_i$ denotes the $i$th iterate of the logarithmic function with $\log_1=\log$. The lower estimate \eqref{eq411} was improved by several authors; the best known estimate, due to Gy\H ory and Yu \cite{gyoy}, is
\begin{align}\label{eq421}
P(F(p,q))\gg \log_2 |p,q|\cdot \frac{\log_3 |p,q|}{\log_4 |p,q|}.
\end{align}
In \eqref{eq411} and \eqref{eq421} the constants implied by $\gg$ are effective and depend only on the degree and height of $F$. We note that \eqref{eq421} was established in a more general form, over number fields.

From Coates' explicit upper bound for the solutions of equation \eqref{eq42} one can easily deduce an explicit upper bound for the solutions of the $S$-unit equation \eqref{2.4} and its weighted version \eqref{2.6a} over $\mathbb{Q}$. The first explicit bounds for the solutions of $S$-unit equations over number fields were obtained by Gy\H ory \cite{gy1,gy3}. These bounds were later improved by several people and led to many applications; see e.g. Evertse and Gy\H ory \cite{evgy3} and the references given there. The best known bounds over number fields are due to Gy\H ory and Yu \cite{gyoy}. For effective generalizations to unit equations over finitely generated domains, see Evertse and Gy\H ory \cite{evgy2}.

\section{Higher dimensional generalizations of Thue equations, Thue--Mahler equations and unit equations}\label{sec5}

Let again $S=\{P_1,\ldots,P_t\}$ be a set of $t\ge 0$ primes, and denote by $\mathbb{Z}_S$ the ring of $S$-integers in $\mathbb{Q}$, i.e. those rational numbers whose denominators do not contain any prime factor different from $P_1,\ldots,P_t$. We consider decomposable form equations of the form
\begin{align}\label{eq51}
F(q_1,\ldots,q_n)=m\quad\textrm{in}\quad q_1,\ldots,q_n\in\mathbb{Z}_S,
\end{align}
where $m\in\mathbb{Z}_S\setminus\{0\}$ and $F\in\mathbb{Z}[X_1,\ldots,X_n]$ is a decomposable form, i.e. a homogeneous polynomial which factorizes into linear factors over $\overline{\mathbb{Q}}$.

Decomposable form equations are of basic importance in Diophantine number theory. For $n=2$, equation \eqref{eq51} can be written in the form \eqref{eq42} and if $F$ is irreducible and of degree $\ge 3$, Mahler's result applies. Conversely, equation \eqref{eq42} can be easily reduced to finitely many equations of the shape \eqref{eq51}. For $n\ge 2$, further important classes of decomposable form equations are norm form equations, discriminant form equations and index form equations. For norm form equations over $\mathbb{Q}$ Schmidt \cite{schmidt} (case $t=0$), Schlickewei \cite{sch} (case $t\ge 0$), and over number fields Laurent \cite{lau} obtained finiteness results for equation \eqref{eq51}, thereby considerably generalizing Mahler's finiteness theorem on equation \eqref{eq42}. For discriminant form equations and index form equations Gy\H ory \cite{gy2,gy4} provided finiteness criteria. The proofs in \cite{schmidt}, \cite{sch} and \cite{lau} are ineffective because they depend of Schmidt's Subspace Theorem and its $p$-adic generalization, while the proofs in \cite{gy2}, \cite{gy4} are based on Baker's effective theory of logarithmic forms, hence are effective.

Evertse and Gy\H ory \cite{evgy1} gave a general finiteness criterion for equation \eqref{eq51}. Let $\mathcal{L}$ be a maximal set of pairwise linearly independent linear factors of $F$ over $\overline{\mathbb{Q}}$. A non-zero subspace $V$ of the $\mathbb{Q}$-vector space $\mathbb{Q}^n$ is said to be $\mathcal{L}$-\textit{non-degenerate} or $\mathcal{L}$\textit{-degenerate} according as $\mathcal{L}$ does or does not contain a subset of at least three linear forms which are linearly dependent on $V$, but pairwise linearly independent on $V$. In particular, $V$ is $\mathcal{L}$-degenerate if $V$ has dimension $1$. We call $V$ $\mathcal{L}$\textit{-admissible} if no form in $\mathcal{L}$ is identically zero on $V$. Evertse and Gy\H ory \cite{evgy1} proved that the following two statements are equivalent:
\begin{enumerate}
	\item[\textit{(i)}] \textit{Every $\mathcal{L}$-admissible subspace of $\mathbb{Q}^n$ of dimension $\ge 2$ is $\mathcal{L}$-non degenerate.}
	\item[\textit{(ii)}] \textit{For any finite set $S$ of primes and any non-zero $m\in\mathbb{Z}_S$, equation \eqref{eq51} has only finitely many solutions.}
\end{enumerate}
This was proved in a more general form, over finitely generated domains over $\mathbb{Z}$.

The proof of the above finiteness criterion depends on the following finiteness result on multivariate unit equations of the form
\begin{align}\label{eq52}
a_1 u_1 +\cdots + a_n u_n =1\quad\textrm{in}\quad u_1,\ldots,u_n\in\Gamma,
\end{align}
where $a_1,\ldots,a_n$ are non-zero elements of a number field $K$ and $\Gamma$ is a finitely generated subgroup of $K^\ast$. This equation is a generalization of \eqref{2.6a}. A solution $u_1,\ldots,u_n$ of \eqref{eq52} is called \textit{degenerate} if there is a vanishing subsum on the left hand side of \eqref{eq52}. In this case \eqref{eq52} has infinitely many solutions if $\Gamma$ is infinite. As a considerable generalization of Mahler's finiteness theorem on $S$-unit equations \eqref{2.6a}, van der Poorten and Schlickewei \cite{por} and independently Evertse \cite{eve1} proved that equation \eqref{eq52} has only finitely many non-degenerate solutions. As is pointed out in Evertse and Gy\H ory \cite{evgy1}, this theorem and the implication $(i)\Rightarrow (ii)$ concerning equation \eqref{eq51} are equivalent statements. For further related results, including bounds for the number of solutions, applications and references, see Evertse and Gy\H ory \cite{evgy3}.

\end{document}